\documentclass[11pt,a4paper]{article}
\usepackage[pdftex]{graphicx}
\usepackage{amsmath,amssymb,amsfonts,amsthm}
\numberwithin{equation}{section}
\usepackage{indentfirst}
\usepackage{enumitem} 
\usepackage[margin=1.3in]{geometry}
\usepackage{fancyhdr}

\usepackage[colorlinks=true,linkcolor=magenta,citecolor=blue,urlcolor=cyan]{hyperref}
\usepackage{titlesec}
\usepackage{etoolbox}
\usepackage{graphicx}  
\usepackage{changepage}
\theoremstyle{plain}
\newtheorem{theorem}{Theorem}[section]
\newtheorem{lemma}[theorem]{Lemma}

\newtheorem{conjecture}{Conjecture}[section]

\makeatletter
\renewcommand{\maketitle}{
	\begin{center}
		{\Large\bfseries{\@title}\par}
		\vskip 1em
		{\normalsize
			\lineskip .5em
			\begin{tabular}[t]{c}
				\@author
			\end{tabular}\par}
		\vskip 1.5em
	\end{center}
}
\makeatother
\renewenvironment{abstract}{
	\begin{adjustwidth}{1.3cm}{1.3cm}
		\noindent{\large\bfseries{A{\scriptsize BSTRACT.}}}
	}{
	\end{adjustwidth}
}

\usepackage{color}
\usepackage{xcolor}
\usepackage[normalem]{ulem} 
\usepackage{soul}

\usepackage{graphicx} 
\usepackage{xstring}  

\usepackage{xstring}
\usepackage{graphicx} 

\usepackage{marvosym}

\begin{document}
	
	\title{Overcolored Partition Restricted by Parity of the Parts }

	\author{M. P. Thejitha and S. N. Fathima}
	
	\maketitle
	
	\begin{abstract}
	Very recently, Thejitha, Sellers, and Fathima defined the function $a_{r,s}(n)$, which enumerates the number of multicolored partitions of $n$, wherein both even parts and odd parts may appear in one of $r$-colors and $s$-colors, respectively, for fixed $r,s\ge 1$. In this paper, we extend the concept to overpartitions.
	\\
		
		\noindent {\bf \small Keywords:} Partitions, Colored partitions, Overpartitions, Generating function, Congruences.\\
		
		\noindent {\bf \small Mathematics Subject Classification (2020):} 05A17, 11P83.
	\end{abstract}
	
	\bigskip
	
	\vspace{0.5em}
	\section{Introduction}
	Let $n$ be a non-negative integer. Throughout, we use the following standard notations.
		\begin{align*}
		(a;q)_0:=1,\;\;(a;q)_n:=\prod_{j=0}^{n-1}(1-aq^j),	\;\; \text{and} \;\;(a;q)_\infty = \prod_{n=0}^\infty(1-aq^{n}).
	\end{align*}\\
	We denote by $p_k(n)$ the function which enumerates the partitions of $n$ into $k$ colors. The generating function for $p_k(n)$ is given by 
	\begin{align*}
	\sum_{n=0}^{\infty}p_k(n)q^n=\dfrac{1}{f_1^k},
\end{align*}
where $ f_m:= (q^m;q^m)_\infty$. Clearly, if $k=1$, then $p_1(n)$ is simply the unrestricted partition function. A partition of a positive integer $n$ is a finite sequence of non-increasing positive integers $\lambda_1, \lambda_2,..., \lambda_r$, called parts, such that $n=\sum_{i=1}^{r}\lambda_r$.\\
\indent
Agarwal and Andrews \cite{agar} were the first to apply color partitions to unrestricted partitions. Among many interesting perspectives of colored partitions that have been considered by several mathematicians, here we focus on the colored partitions, wherein both even and odd parts may appear in one of $r$-colors and $s$-colors, respectively, for fixed $r,s\ge 1$. These partitions are studied by Thejitha, Sellers, and Fathima \cite{tsf} and denoted by $a_{r,s}(n)$. The generating function for $a_{r,s}(n)$ is given by
	\begin{align}\label{gf0}
	\sum_{n=0}^{\infty}a_{r,s}(n)q^n=\dfrac{f_2^{s-r}}{f_1^s}.
\end{align}
	The case $r=1$ are first studied by Hirschhorn and Sellers \cite{hs1}, while the case $s=1$ are first studied by  Amdeberhan, Sellers, and Singh \cite{ajsi}.\\
	\indent An overpartition of $n$ extends the unrestricted partitions by allowing the first occurrence of each part size to be overlined. The number of overpartitions of $n$ is denoted by $\bar{p}(n)$. For instance, the overpartition of $3$ are 
	\begin{align*}
	3, \bar{3}, 2+1, \bar{2}+1, 2+\bar{1}, \bar{2}+\bar{1}, 1+1+1, \bar{1}+1+1.
	\end{align*}
	The study of overpartitions dates back to MacMahon \cite{mac}, and was later revisited by Corteel and Lovejoy \cite{colo}, and has received a lot of attention. Corteel and Lovejoy \cite{colo} noted that the generating function for $\bar{p}(n)$ is given by
	\begin{align*}
	\sum_{n=0}^{\infty}\bar{p}(n)q^n=\dfrac{f_2}{f_1^2}.
\end{align*}
\indent A natural extension arises by considering overpartitions that are colored. As in \cite{ajsi}, Amdeberhan, Sellers, and Singh extended their work ($s=1$ case of \eqref{gf0}) to overpartitions. The  number of partitions of $n$ wherein each even part may appear in $r\ge1$  different colors and the first occurrence of parts may be over-lined, denoted by $\bar{a}_r^*(n)$, has the generating function
\begin{align}\label{1}
	\sum_{n=0}^{\infty}\bar{a}_r^*(n)q^n=\dfrac{f_4^{r-1}}{f_1^2f_2^{2r-3}}.
\end{align}
\noindent Recently, Paksok and Saikia \cite{pak} proved new Ramanujan-type congruence modulo $8$ and $16$ for $\bar{a}_{2m+1}^*(n)$, $\bar{a}_{2m+2}^*(n)$, and $\bar{a}_{8m+3}^*(n)$.
Later, Das et. al \cite{das} have studied the divisibility properties of $\bar{a}_r^*(n)$ for different values of $r$. In particular, they proved the following generalizations of Seller's results {{\cite[Th. 2.5 and Th. 2.7]{ovrcu}}}.
	\begin{theorem}[{{\cite[Theorem~1.4]{das}}}]\label{dasm4}
	For all $n\ge 1$,
	\[
	\bar{a}_{r}^*(n)\equiv
	\begin{cases}
		2\pmod 4, & \text{if } n=k^2, k\in \mathbb{Z},\\[4pt]
		2(r+1)\pmod 4, & \text{if } n=2k^2, k\in \mathbb{Z},\\[4pt]
		0 \pmod 4, & \text{otherwise }.
	\end{cases}
	\]
\end{theorem}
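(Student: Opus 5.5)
Working modulo $4$, I rewrite the generating function \eqref{1} as a product of powers of two basic theta series and read off the coefficients. The two series are
\[
\varphi(q)=\frac{f_2^5}{f_1^2f_4^2}=1+2\sum_{k\ge1}q^{k^2},
\qquad
\frac{f_1^2}{f_2}=\varphi(-q)=1+2\sum_{k\ge1}(-1)^kq^{k^2}.
\]
Since $\overline{p}$ has generating function $f_2/f_1^2=1/\varphi(-q)$, I can write
\[
\sum_{n\ge0}\bar a_r^*(n)q^n=\frac{f_2}{f_1^2}\cdot\frac{f_4^{r-1}}{f_2^{2r-2}}=\frac{1}{\varphi(-q)}\left(\frac{f_4}{f_2^2}\right)^{r-1}=\frac{1}{\varphi(-q)}\cdot\frac{1}{\varphi(-q^2)^{r-1}}.
\]
So the whole generating function equals $\varphi(-q)^{-1}\varphi(-q^2)^{-(r-1)}$. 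This also checks out in the case $r=1$.

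Next I reduce modulo $4$. Write $\varphi(-q)=1+2A(q)$ with $A(q)=\sum_{k\ge1}(-1)^kq^{k^2}$. Then
\[
(1+2A)^{-1}\equiv 1-2A\equiv 1+2A \pmod 4,
\]
because $4A^2\equiv0$ and $-2\equiv 2$. In the same way, $(1+2A(q^2))^{-(r-1)}\equiv 1+2(r-1)A(q^2)\pmod 4$. Multiplying, the cross term $4A(q)A(q^2)$ vanishes modulo $4$, so
\[
\sum_{n\ge0}\bar a_r^*(n)q^n\equiv 1+2\sum_{k\ge1}q^{k^2}+2(r-1)\sum_{k\ge1}q^{2k^2}\pmod 4.
\]
Here the signs $(-1)^k$ have been dropped, since $2(-1)^k\equiv 2\pmod 4$.

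Now I read off the coefficient of $q^n$ for $n\ge1$. If $n=k^2$, it is $2\pmod 4$. If $n=2k^2$, it is $2(r-1)\equiv 2(r+1)\pmod 4$, using $2(r-1)-2(r+1)=-4$. For every other $n$ it is $0\pmod 4$. An integer cannot be both a square and twice a positive square, since $\sqrt2$ is irrational, so the first two cases never overlap. This gives exactly the three cases of Theorem~\ref{dasm4}.

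The main obstacle is only the algebraic identification of the generating function with $\varphi(-q)^{-1}\varphi(-q^2)^{1-r}$; after that the congruence is immediate. The one point needing care is that the inverse of a power series $\equiv 1\pmod 2$ can be expanded modulo $4$ as $1-2A$, which is legitimate because $(1+2A)(1-2A)=1-4A^2\equiv 1\pmod 4$.
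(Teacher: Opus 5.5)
Your proof is correct, and it uses a different decomposition from the paper's. The paper does not prove Theorem~\ref{dasm4} directly; it quotes it from Das et al. and recovers it as the case $s=1$ of Theorem~\ref{tmod4}, since \eqref{gf} with $s=1$ is \eqref{1}.

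The paper's key step is Lemma~\ref{cai}. Iterating $\bar A_{r,s}(q)=\phi(q)^s\phi(q^2)^{r-s}\bar A_{r,s}(q^2)^2$ gives the infinite product $\phi(q)\prod_{i\ge1}\phi(q^{2^i})^{(r+1)2^{i-1}}$, in which all exponents are positive. For $i\ge2$ each factor is the square of a series $\equiv1\pmod 2$, hence $\equiv1\pmod 4$. This leaves $\phi(q)\phi(q^2)^{r+1}$, which is expanded binomially.

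You instead use the exact finite identity $\varphi(-q)^{-1}\varphi(-q^2)^{1-r}$, together with the fact that $1+2A$ is its own inverse modulo $4$. The two representations describe the same object. Iterating $\varphi(q)\varphi(-q)=\varphi(-q^2)^2$ gives $1/\varphi(-q)=\prod_{i\ge0}\varphi(q^{2^i})^{2^i}$, and substituting this into your formula reproduces Lemma~\ref{cai}.

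What each route buys:
\begin{itemize}
\item Your route is shorter and needs no iteration or infinite product. It also works verbatim for general $(r,s)$: $\bar A_{r,s}(q)=\varphi(-q)^{-s}\varphi(-q^2)^{s-r}$ gives Theorem~\ref{tmod4} at once, because $2(s-r)\equiv2(s+r)\pmod 4$.
\item The paper's positive-exponent product pays off at higher powers of $2$. Modulo $8$ one simply truncates after $\phi(q^4)^{2(r+s)}$ and expands, which gives Theorem~\ref{tmod8}. With your representation you would need $(1+2A)^{-1}\equiv1-2A+4A^2\pmod 8$, together with $4A^2\equiv4A(q^2)\pmod 8$. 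That works, but it is less mechanical.
\end{itemize}
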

	\begin{theorem}[{{\cite[Theorem~1.6]{das}}}]\label{dasm8}
	For all $n\ge 1$,
	\[
	\bar{a}_{r}^*(n)\equiv
	\begin{cases}
		2\pmod 8, & \text{if } n=k^2, k\in\mathbb{Z},\\[4pt]
		6(r+1)\pmod 8, & \text{if } n=2(2k)^2, k\in\mathbb{Z},\\[4pt]
		2(r+1)\pmod 8, & \text{if } n=2(2k-1)^2, k\in\mathbb{Z},\\[4pt]
		2(r+1)(r+2)\pmod 8,  & \text{if } n=4k^2, k\in\mathbb{Z},\\[4pt]
		4(r+1)\pmod 8,  & \text{if } n=k^2+2\ell^2, k\in\mathbb{Z},\\[4pt]
		0\pmod 8,  & \text{otherwise }.\\[4pt]
	\end{cases}
	\]
\end{theorem}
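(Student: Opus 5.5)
The plan is to write the generating function \eqref{1} as a product of powers of Ramanujan's theta function $\varphi(q)=\sum_{k\in\mathbb{Z}}q^{k^2}$ and then reduce modulo $8$ using $(1+2x)^{4}\equiv 1\pmod 8$.

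\textbf{Step 1: theta-product form.} Recall $\varphi(-q)=f_1^2/f_2$ and $\varphi(-q^2)=f_2^2/f_4$. Then
\[
\frac{f_4^{r-1}}{f_1^2f_2^{2r-3}}=\frac{1}{\varphi(-q)}\cdot\frac{1}{\varphi(-q^2)^{\,r-1}} .
\]
Now use $\varphi(q)\varphi(-q)=\varphi(-q^2)^2$, which gives $1/\varphi(-q)=\varphi(q)/\varphi(-q^2)^2$. Iterating this identity yields
\[
\frac{1}{\varphi(-q)}=\prod_{j\ge 0}\varphi(q^{2^j})^{2^j}.
\]
Substituting, and writing $m=r+1$,
\[
\sum_{n\ge0}\bar a_r^*(n)q^n=\varphi(q)\prod_{j\ge1}\varphi(q^{2^j})^{m2^{j-1}}
=\varphi(q)\,\varphi(q^2)^{m}\,\varphi(q^4)^{2m}\,\varphi(q^8)^{4m}\cdots .
\]

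\textbf{Step 2: reduction modulo $8$.} Put $A(q)=\sum_{k\ge1}q^{k^2}$, so that $\varphi(q)=1+2A(q)$. Every factor with $j\ge3$ has exponent divisible by $4$, hence is $\equiv1\pmod 8$. For the remaining factors we use
\[
(1+2x)^m\equiv 1+2mx+4\tbinom m2x^2 \pmod 8 ,
\]
together with $4A(q^2)^2\equiv 4A(q^4)\pmod 8$, which holds because the cross terms of $A(q^2)^2$ come in pairs. Similarly,
\[
\varphi(q^4)^{2m}\equiv\bigl(1+4A(q^4)+4A(q^8)\bigr)^m\equiv 1+4mA(q^4)+4mA(q^8) \pmod 8 .
\]
Multiplying out and discarding every term carrying a factor $8$ gives
\[
\sum_{n\ge0}\bar a_r^*(n)q^n\equiv 1+2A(q)+2mA(q^2)+\bigl(4\tbinom m2+4m\bigr)A(q^4)+4mA(q^8)+4m\,A(q)A(q^2)\pmod 8 .
\]
Here $A(q)A(q^2)=\sum_{k,\ell\ge1}q^{k^2+2\ell^2}$.

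\textbf{Step 3: read off coefficients.} We go through the types of $n$ in turn.
\begin{itemize}
\item If $n$ is a square that is not of the form $4k^2$, only $2A(q)$ contributes, giving $2$.
\item If $n=2(2k-1)^2$, only $2mA(q^2)$ contributes, giving $2m=2(r+1)$.
\item If $n=2(2k)^2=8k^2$, both $2mA(q^2)$ and $4mA(q^8)$ contribute, giving $6m$.
\item If $n=4k^2$, the terms $2A(q)$ and $(4\binom m2+4m)A(q^4)$ contribute. Modulo $8$ the coefficient of $A(q^4)$ equals $2m(m+1)=2(r+1)(r+2)$, and the extra $2$ from $A(q)$ must be added to it.
\item In all other cases the coefficient is $4m$ times the number of representations $n=k^2+2\ell^2$ with $k,\ell\ge1$, taken mod $2$.
\end{itemize}

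\textbf{Main obstacle.} The delicate part is bookkeeping rather than analysis. The cases listed in Theorem~\ref{dasm8} overlap: for example, $4k^2$ is also a square, and some $n$ have several representations $k^2+2\ell^2$. These overlapping contributions add. So I would state the theorem's cases precisely as additive contributions, interpreting ``$n=k^2+2\ell^2$'' as counting representations with parity, and check the $4k^2$ case carefully against the extra $2$ coming from $A(q)$. The reduction $4\binom m2A(q^2)^2\equiv 4\binom m2A(q^4)$ also needs care: it rests on $A(q)^2\equiv A(q^2)\pmod 2$.
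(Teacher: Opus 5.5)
Your proposal is correct and is essentially the paper's argument. The statement is the case $s=1$ of Theorem \ref{tmod8}, whose proof uses Lemma \ref{cai} to write the generating function as $\phi(q)\prod_{i\ge1}\phi(q^{2^i})^{(r+1)2^{i-1}}$, truncates modulo $8$ to $\phi(q)\phi(q^2)^{r+1}\phi(q^4)^{2(r+1)}$, and expands binomially exactly as you do; your derivation of the product via $\varphi(q)\varphi(-q)=\varphi(-q^2)^2$ instead of the paper's functional equation is only a cosmetic difference.

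Your additive reading of the overlapping cases is the right one. In the last case it means counting representations with $k,\ell\ge1$ with multiplicity, which is what the paper's final displayed congruence encodes, and it is needed for the statement to hold; for example, $\bar{a}_1^*(4)=\bar{p}(4)=14\equiv 2+2\cdot 2\cdot 3 \pmod 8$. Your first bullet should be corrected in the same way, since squares can also pick up $4(r+1)$: $9=1^2+2\cdot 2^2$ gives coefficient $2+4(r+1)$, not $2$.
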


It is easy to note that number of partitions of $n$ wherein each odd part may appear in $s\ge1$  different colors and the first occurrence of parts may be over-lined, denoted by $\bar{a}_s(n)$, has the generating function
	\begin{align}\label{2}
	\sum_{n=0}^{\infty}\bar{a}_s(n)q^n=\dfrac{f_2^{3s-2}}{f_1^{2s}f_4^{s-1}}\; .
\end{align}
Our goal in this work is to consider $\bar{a}_{r,s}(n)$, the number of partitions of $n$ wherein both even and odd parts may appear in one of $r$-colors and $s$-colors, respectively, for fixed $r,s\ge 1$ and the first occurrence of parts may be over-lined. We work out to obtain the generating function for $\bar{a}_{r,s}(n)$ to be
	\begin{align}\label{gf}
	\bar{A}_{r,s}(q):=	\sum_{n=0}^{\infty}\bar{a}_{r,s}(n)q^n=\dfrac{f_2^{3s-2r}}{f_1^{2s}f_4^{s-r}}\; .
	\end{align}
	It is interesting to note, the generating function \eqref{gf} unifies \eqref{1} and \eqref{2}. In this paper, we prove several infinite families of congruences modulo prime and modulo powers of $2$ for $\bar{a}_{r,s}(n)$ which generalizes Theorem \ref{dasm4} and \ref{dasm8}. The main results of this paper pertains to several Ramanujan-type congruences modulo powers of $2$ and modulo prime $p\ge3$, satisfied by $\bar{a}_{r,s}$. 

	The following Theorem \ref{tmod4} and \ref{tmod8} are generalization of Theorem \ref{dasm4} and \ref{dasm8}, respectively. 
	\begin{theorem}\label{tmod4}
		For all $n\ge 1$,
		\[
		\bar{a}_{r,s}(n)\equiv
		\begin{cases}
			2s\pmod 4, & \text{if } n=k^2 \text{ for some integer} \;k,\\[4pt]
			2(r+s)\pmod 4, & \text{if } n=2k^2\; \text{for some integer}\; k,\\[4pt]
			0 \pmod 4, & \text{otherwise }.
		\end{cases}
		\]
	\end{theorem}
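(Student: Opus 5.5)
We work modulo $4$ directly with the generating function \eqref{gf}. The key identity is
\[
\frac{f_2}{f_1^2}=\frac{1}{\varphi(-q)},\qquad \varphi(q)=\sum_{k\in\mathbb{Z}}q^{k^2}=\frac{f_2^5}{f_1^2f_4^2},
\]
which implies $\varphi(-q)=f_1^2/f_2$. We therefore rewrite
\[
\bar A_{r,s}(q)=\frac{f_2^{3s-2r}}{f_1^{2s}f_4^{s-r}}=\Big(\frac{f_2}{f_1^2}\Big)^{s}\Big(\frac{f_2^{2}}{f_4}\Big)^{s-r}
=\frac{1}{\varphi(-q)^{s}}\cdot \frac{1}{\varphi(-q^2)^{s-r}},
\]
using $\varphi(-q^2)=f_2^2/f_4$. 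This gives
\[
\bar A_{r,s}(q)=\varphi(-q)^{-s}\,\varphi(-q^2)^{r-s}.
\]
The exponents may be negative, so the next step is to control inverses of $\varphi$ modulo $4$.

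Write $\varphi(-q)=1+2X$, where $X=\sum_{k\ge1}(-1)^kq^{k^2}$. Modulo $4$ we have $(1+2X)^2\equiv 1$, so $(1+2X)^{-1}\equiv 1+2X$. Hence for every integer $m$ (positive or negative),
\[
(1+2X)^m\equiv 1+2mX \pmod 4,
\]
since $(1+2X)^m=1+2mX+4(\cdots)$ for $m\ge0$, and negative $m$ reduce to this via the inverse. Similarly, with $Y=\sum_{k\ge1}(-1)^kq^{2k^2}$ we get $\varphi(-q^2)=1+2Y$ and $(1+2Y)^{m}\equiv 1+2mY$.

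Multiplying the two congruences, the cross term $4(\cdots)$ vanishes modulo $4$, so
\[
\bar A_{r,s}(q)\equiv 1-2sX+2(r-s)Y\equiv 1+2s\sum_{k\ge1}q^{k^2}+2(r+s)\sum_{k\ge1}q^{2k^2}\pmod 4.
\]
Here we used $(-1)^k\equiv 1$ and $-2\equiv 2$ modulo $4$ on terms already multiplied by $2$, and $2(r-s)\equiv 2(r+s)\pmod 4$.

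The squares and the twice-squares are disjoint for $n\ge1$, because $\sqrt2$ is irrational. Reading off coefficients therefore gives $2s$ when $n=k^2$, $2(r+s)$ when $n=2k^2$, and $0$ otherwise, as claimed. The only step needing care is the product decomposition of \eqref{gf} into powers of $\varphi(-q)$ and $\varphi(-q^2)$. This is a routine exponent check: the $f_1$ exponent is $-2s$, the $f_2$ exponent is $s+2(s-r)=3s-2r$, and the $f_4$ exponent is $-(s-r)$.
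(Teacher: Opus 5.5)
Your argument reaches the correct congruence, but one intermediate identity is wrong. Since $\varphi(-q^2)=f_2^2/f_4$, the factor $\bigl(f_2^2/f_4\bigr)^{s-r}$ equals $\varphi(-q^2)^{s-r}$, not $\varphi(-q^2)^{-(s-r)}$. The exact identity is therefore
\[
\bar{A}_{r,s}(q)=\varphi(-q)^{-s}\,\varphi(-q^2)^{s-r}.
\]
For example, with $(r,s)=(2,1)$, \eqref{gf} gives $f_4/(f_1^2f_2)$, while your formula gives $f_2^3/(f_1^2f_4)$. The ``routine exponent check'' you cite only verifies the rewriting in terms of $f_1,f_2,f_4$, not the conversion to $\varphi$, which is where the slip occurs.

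The slip is harmless here. The congruence $(1+2Y)^m\equiv 1+2mY\pmod 4$ depends only on the parity of $m$, and $2(s-r)\equiv 2(r-s)\equiv 2(r+s)\pmod 4$. Everything after that point is correct: the self-inverse trick $(1+2X)^2\equiv 1\pmod 4$ for negative exponents, the sign cleanup, and the disjointness of squares and twice-squares.

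Your route also differs from the paper's. The paper first proves Lemma~\ref{cai}, $\bar{A}_{r,s}(q)=\phi(q)^s\prod_{i\ge1}\phi(q^{2^i})^{(r+s)2^{i-1}}$, by iterating $\bar{A}_{r,s}(q)=\phi(q)^s\phi(q^2)^{r-s}\bar{A}_{r,s}(q^2)^2$; your $\varphi$ is the paper's $\phi$. Modulo $4$ it then drops the factors with $i\ge 2$, whose exponents are even, and expands $\phi(q)^s\phi(q^2)^{r+s}$ binomially.

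Your corrected identity is the finite form of that lemma. Substituting $1/\varphi(-q)=\prod_{i\ge0}\varphi(q^{2^i})^{2^i}$, obtained by iterating $\varphi(q)\varphi(-q)=\varphi(-q^2)^2$, recovers Lemma~\ref{cai} exactly.

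Your version is shorter and needs no infinite iteration. The price is negative exponents and alternating signs, both of which happen to be invisible modulo $4$. The paper's form uses only nonnegative exponents on the positive-coefficient series $\phi(q^{2^i})$. That is what makes the modulo-$8$ refinement (Theorem~\ref{tmod8}) a one-factor extension of the same computation. Along your route you would have to carry $(1+2X)^{-1}\equiv 1-2X+4X^2$ and track the signs $(-1)^k$ modulo $8$.
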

		\begin{theorem}\label{tmod8}
		For all $n\ge 1$,
		\[
		\bar{a}_{r,s}(n)\equiv
		\begin{cases}
			2s\pmod 8, & \text{if } n=k^2 \text{ for some integer }k,\\[4pt]
			2(3r+2s+s^2)\pmod 8, & \text{if } n=2(2k)^2\; \text{for some integer } k,\\[4pt]
			2(r+s^2)\pmod 8, & \text{if } n=2(2k-1)^2 \text{ for some integer k},\\[4pt]
			2(r+s)(r+s+1)\pmod 8,  & \text{if } n=4k^2 \text{ for some integer }k,\\[4pt]
			4s(r+s)\pmod 8,  & \text{if } n=k^2+2\ell^2 \text{ for some integers }k,l,\\[4pt]
			0\pmod 8,  & \text{otherwise }.\\[4pt]
		\end{cases}
		\]
	\end{theorem}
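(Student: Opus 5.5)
The plan is to rewrite the generating function \eqref{gf} in terms of Ramanujan's theta function $\varphi(q)=\sum_{k\in\mathbb{Z}}q^{k^2}=\frac{f_2^5}{f_1^2f_4^2}$, together with $\varphi(-q)=\frac{f_1^2}{f_2}$ and the identity $\varphi(q)\varphi(-q)=\varphi(-q^2)^2$. Splitting the quotient in \eqref{gf} gives
\[
\bar{A}_{r,s}(q)=\Bigl(\frac{f_2}{f_1^2}\Bigr)^{s}\Bigl(\frac{f_2^2}{f_4}\Bigr)^{s-r}=\frac{\varphi(-q^2)^{s-r}}{\varphi(-q)^{s}}=\frac{\varphi(q)^s}{\varphi(-q^2)^{r+s}} .
\]
Put $X=\sum_{k\ge1}q^{k^2}$ and $Y=\sum_{k\ge1}(-1)^kq^{2k^2}$, so that $\varphi(q)=1+2X$ and $\varphi(-q^2)=1+2Y$. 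The whole argument then reduces to a binomial expansion modulo $8$, with $t:=r+s$.

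\textbf{Step 1 (expansion).} Since $(2X)^3\equiv(2Y)^3\equiv0\pmod 8$, we have $(1+2X)^s\equiv 1+2sX+4\binom{s}{2}X^2$. Using the generalized binomial coefficient $\binom{-t}{2}=\binom{t+1}{2}$, we also get $(1+2Y)^{-t}\equiv 1-2tY+4\binom{t+1}{2}Y^2 \pmod 8$. Multiplying these, and discarding every term of order $8$ or higher, gives
\[
\bar{A}_{r,s}(q)\equiv 1+2sX-2tY+4\tbinom{s}{2}X^2+4\tbinom{t+1}{2}Y^2-4st\,XY \pmod 8 .
\]
Reducing this modulo $4$ leaves $1+2sX+2tY$, since $-2t\equiv 2t \pmod 4$. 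This already gives Theorem \ref{tmod4}.

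\textbf{Step 2 (reduce the quadratic terms mod 2).} The quadratic terms carry a factor $4$, so they are needed only modulo $2$. Expanding $X^2=\sum_{a,b\ge1}q^{a^2+b^2}$, the off-diagonal terms pair up as $(a,b)\leftrightarrow(b,a)$ and cancel modulo $2$. Hence $X^2\equiv\sum_{k\ge1}q^{2k^2}$, and likewise $Y^2\equiv\sum_{k\ge1}q^{4k^2}\pmod 2$. There is no such symmetry for $XY=\sum_{k,\ell\ge1}\pm q^{k^2+2\ell^2}$. Its coefficient at $n$ is therefore congruent modulo $2$ to the number of pairs $k,\ell\ge1$ with $n=k^2+2\ell^2$.

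\textbf{Step 3 (read off coefficients).} The term $2sX$ contributes $2s$ at $n=k^2$. The term $-2tY$ contributes $-2t\equiv 6t$ at $n=2(2k)^2$ and $2t$ at $n=2(2k-1)^2$. The term $4\binom{s}{2}X^2$ adds $2s(s-1)$ at every $n=2k^2$. Combining these gives
\[
6t+2s^2-2s=2(3r+2s+s^2)\qquad\text{and}\qquad 2t+2s^2-2s=2(r+s^2)
\]
at the two kinds of $n=2k^2$, as claimed. The term $4\binom{t+1}{2}Y^2$ contributes $2t(t+1)=2(r+s)(r+s+1)$ at $n=4k^2$. The term $-4stXY$ contributes $4s(r+s)$ at those $n=k^2+2\ell^2$ having an odd number of representations. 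All other coefficients vanish modulo $8$.

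\textbf{Main obstacle.} I expect the hard part to be the bookkeeping of overlapping cases rather than the algebra. An integer $n=4k^2$ is also a square, so it receives the contribution $2s$ in addition to $2(r+s)(r+s+1)$. Similarly, squares and numbers of the form $2k^2$ can also be written as $k^2+2\ell^2$. Finally, the $XY$ term contributes only when the number of representations $n=k^2+2\ell^2$ is odd. I would handle this by stating the congruence as the sum of the contributions listed in Step 3, with the case $n=k^2+2\ell^2$ understood as counting representations with $k,\ell\ge1$ modulo $2$. I would then check that this reading agrees with the case list in Theorem \ref{tmod8} and with Theorem \ref{dasm8} when $s=1$, adding an explicit remark about overlapping cases wherever the stated case list needs it.
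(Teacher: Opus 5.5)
Your proof is correct. It reaches the same congruence modulo $8$ as the paper's final display, namely
\[
1+2s\sum_{n\ge1}q^{n^2}+2(3r+2s+s^2)\sum_{n\ge1}q^{8n^2}+2(r+s^2)\sum_{n\ge1}q^{2(2n-1)^2}+2(r+s)(r+s+1)\sum_{n\ge1}q^{4n^2}+4s(r+s)\sum_{m,n\ge1}q^{n^2+2m^2},
\]
but through a different decomposition.

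The paper starts from Lemma~\ref{cai}. It discards the factors with $i\ge3$, since $(1+2Z)^4\equiv1\pmod 8$, and then expands the three factors $\phi(q)^s\phi(q^2)^{r+s}\phi(q^4)^{2(r+s)}$, all with nonnegative exponents. In that route, the split between $n=2(2k)^2$ and $n=2(2k-1)^2$ comes from the extra term $4(r+s)(2r+2s-1)\sum q^{8n^2}\equiv 4(r+s)\sum q^{8n^2}$ produced by $\phi(q^4)^{2(r+s)}$.

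You instead apply $\phi(q)\phi(-q)=\phi(-q^2)^2$ once to get the finite form $\phi(q)^s\phi(-q^2)^{-(r+s)}$. Lemma~\ref{cai} is just this identity iterated, since $1/\phi(-q^2)=\prod_{i\ge1}\phi(q^{2^i})^{2^{i-1}}$. Your version has only two factors and needs no truncation of an infinite product, and the parity split comes directly from the sign $(-1)^k$ in $\phi(-q^2)$. The cost is a negative exponent, which you handle correctly via $\binom{-t}{2}=\binom{t+1}{2}$. The paper's route has the advantage that Lemma~\ref{cai} serves as the common tool for all its other theorems.

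Your concern about overlapping cases is justified. Do not expect to reconcile your formula with the literal case list, because the list is only correct when read additively. In that reading, the last case counts representations $n=k^2+2\ell^2$ with $k,\ell\ge1$ modulo $2$, which is exactly what your Step~3 and the paper's final display say. Two examples show the literal list fails:
\begin{itemize}
\item[(i)] $\bar{a}_{1,1}(4)=\bar{p}(4)=14\equiv 6\pmod 8$. This equals $2s+2(r+s)(r+s+1)=2+12$, not either case alone.
\item[(ii)] Take $(r,s)=(2,1)$, where $4s(r+s)\equiv 4$. The number $27=5^2+2\cdot1^2=3^2+2\cdot3^2$ has two representations, so $\bar{a}_{2,1}(27)\equiv 0\pmod 8$, not $4$.
\end{itemize}
The same caveat applies to Theorem~\ref{dasm8} as quoted here (the case $s=1$), so checking against it will not settle the question. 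State the theorem as the sum of contributions you list; that is the version actually proved, both by you and by the paper.
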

	In the next two theorems we prove	infinite families of congruence modulo powers of $2$.
		\begin{theorem}\label{t1}
		For all $n\ge1$, $r,s,i,j\ge 1$, and for $k\in \mathbb{Z}$, we have
		\begin{align}
		\bar{a}_{r,s}(n)&\equiv 0 \pmod 2 \label{e1.151}\\
		\bar{a}_{r,s}(4n+2)&\equiv 0\pmod 4, \quad\text{when }  r+s=2k \label{e1.52}\\
		\bar{a}_{2j,2i+1}(4n+3)&\equiv 0 \pmod 4 \label{e1.53}\\
		\bar{a}_{r,2i}(3n+1)&\equiv 0 \pmod 4 \label{e3n1m4}\\
		\bar{a}_{r,s}(3n+2)&\equiv 0 \pmod 4, \quad \text{when } r+s=2k\label{e3n2m4}\\
		\bar{a}_{r,s}(9n+3)&\equiv 0 \pmod 4 \label{e9n3m4}\\
		\bar{a}_{r,s}(9n+6)&\equiv 0 \pmod 4 \label{e9n6m4}\\
		\bar{a}_{2j+1, s}(9n+3)&\equiv 0 \pmod 8 \label{e9n3m8}\\
	    \bar{a}_{2j+1, s}(9n+6)&\equiv 0 \pmod 8 \label{e9n6m8}.
		\end{align} 
	\end{theorem}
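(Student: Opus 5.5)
The plan is to derive every congruence in Theorem \ref{t1} directly from the residue descriptions in Theorems \ref{tmod4} and \ref{tmod8}. Once those are available, each case reduces to checking which of the special forms $k^2$, $2k^2$, $4k^2$, $k^2+2\ell^2$ can lie in the given arithmetic progression. As a cross-check, it helps to rewrite \eqref{gf} as
\[
\bar{A}_{r,s}(q)=\Big(\frac{f_2}{f_1^2}\Big)^{s}\Big(\frac{f_2^2}{f_4}\Big)^{s-r}=\varphi(-q)^{-s}\,\varphi(-q^2)^{s-r},
\qquad \varphi(-q)=1+2\sum_{k\ge1}(-1)^k q^{k^2}.
\]
Since $\varphi(-q)\equiv 1\pmod 2$, this gives $\bar{A}_{r,s}(q)\equiv1\pmod 2$, which is \eqref{e1.151}. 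Congruence \eqref{e1.151} also follows at once from Theorem \ref{tmod4}, because all three residues listed there are even.

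For the modulus $4$ statements I use Theorem \ref{tmod4}: $\bar a_{r,s}(n)\not\equiv0\pmod4$ is possible only when $n=k^2$ (residue $2s$) or $n=2k^2$ (residue $2(r+s)$).
\begin{itemize}
\item For \eqref{e1.52}: squares are $0,1\pmod 4$, so no $4n+2$ is a square. Hence $4n+2$ can only be of the form $2k^2$, where the residue $2(r+s)$ vanishes mod $4$ when $r+s$ is even.
\item For \eqref{e1.53}: neither $k^2$ nor $2k^2$ is $\equiv 3\pmod 4$, so the congruence holds for all $r,s$. In particular it holds for $r=2j$, $s=2i+1$.
\item For \eqref{e3n1m4}: $2k^2\equiv 0,2\pmod 3$ is never $\equiv1\pmod 3$. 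Only squares occur, with residue $2s\equiv0\pmod 4$ because $s=2i$.
\item For \eqref{e3n2m4}: squares are never $2\pmod 3$. The $2k^2$ case gives residue $2(r+s)\equiv0\pmod 4$ for $r+s$ even.
\item For \eqref{e9n3m4} and \eqref{e9n6m4}: squares mod $9$ lie in $\{0,1,4,7\}$ and $2k^2$ mod $9$ lies in $\{0,2,8,5\}$. Neither set meets $\{3,6\}$.
\end{itemize}

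For the modulus $8$ statements \eqref{e9n3m8} and \eqref{e9n6m8} I use Theorem \ref{tmod8}. First I check that $k^2$, $2(2k)^2$, $2(2k-1)^2$ and $4k^2$ are never $\equiv3,6\pmod 9$. The last case follows from $4k^2\bmod 9\in\{0,4,7,1\}$; the other three follow from the residue sets above. The only surviving nonzero case is therefore $n=k^2+2\ell^2$, which can indeed be $\equiv3$ or $6\pmod 9$; for example $1+2=3$. Its residue is $4s(r+s)$. With $r=2j+1$ odd, either $s$ is even, or $s$ is odd and then $r+s$ is even. In both cases $s(r+s)$ is even, so $4s(r+s)\equiv0\pmod 8$.

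The main obstacle is interpreting the case list in Theorem \ref{tmod8} correctly. An integer can be simultaneously a square and of the form $k^2+2\ell^2$, for instance $9=1+2\cdot 2^2$, so the cases are not mutually exclusive as written. I would check in the proof of Theorem \ref{tmod8} that the $k^2+2\ell^2$ case really means "contributing $4s(r+s)$ per representation beyond the listed forms." The argument above is robust to this: for $n\equiv3,6\pmod 9$ none of the other forms can occur, so only the $k^2+2\ell^2$ contribution matters. That contribution is a multiple of $4s(r+s)$, hence $0\pmod 8$ however the representations are counted.
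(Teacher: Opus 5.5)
Your proof is correct. For \eqref{e1.151} and \eqref{e3n1m4}--\eqref{e9n6m8} it is essentially the paper's argument. Parity comes from a product representation: you use $\bar A_{r,s}(q)=\varphi(-q)^{-s}\varphi(-q^2)^{s-r}$, the paper uses Lemma \ref{cai}. The remaining congruences are then read off Theorems \ref{tmod4} and \ref{tmod8}. Your residue sets modulo $9$ replace the paper's observation that $9n+3$ and $9n+6$ are divisible by $3$ but not by $9$.

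The genuine difference is in \eqref{e1.52} and \eqref{e1.53}. The paper does not use Theorem \ref{tmod4} there. It applies the $2$-dissection of Lemma \ref{ld} twice to get \eqref{e2n} and \eqref{e21n}, and drops the binomial terms that vanish modulo $4$. It then extracts odd-indexed coefficients from what remains, which is an eta quotient in $q^2$. Your route uses only that no $4n+2$ is a square and no $4n+3$ is $k^2$ or $2k^2$. It is shorter and proves more: $\bar a_{r,s}(4n+3)\equiv 0\pmod 4$ for all $r,s\ge 1$, so the hypothesis $(r,s)=(2j,2i+1)$ is unnecessary. The paper's own \eqref{e21n} shows the same thing, since modulo $4$ it collapses to $2s$ times a function of $q^2$ for any $r,s$. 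What the dissection buys is explicit generating-function congruences for $\sum\bar a_{r,s}(2n)q^n$ and $\sum\bar a_{r,s}(2n+1)q^n$. The paper pushes these to the moduli $2^{k+1}$ and $2^{k+2}$ in Theorem \ref{t1.8}, which a residue table modulo $4$ cannot do.

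Your caution about Theorem \ref{tmod8} is justified, and you handle it correctly. Its proof produces $4s(r+s)\sum_{m,n\ge1}q^{n^2+2m^2}$ as an additive term, so the contribution is indeed per representation. More generally, the listed cases are summands rather than mutually exclusive alternatives. For example, with $r=s=1$ one has $\bar a_{1,1}(4)=\bar p(4)=14\equiv 6\pmod 8$. This is the sum $2+12$ of the $k^2$ and $4k^2$residues, not either one alone. Since $9n+3$ and $9n+6$ avoid every form other than $k^2+2\ell^2$, and $s(r+s)$ is even when $r$ is odd, your conclusion stands.
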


	\begin{theorem}\label{t1.8}
		For all $n\ge 0$ and $r,k,i,j\ge 1$, we have
		\begin{align}
			\bar{a}_{r,2^k}(2n+1)&\equiv0\pmod {2^{k+1}} \label{e1.61}\\
			\bar{a}_{r,2^ki+2^{k-1}}(2n+1)&\equiv0\pmod {2^{k}} \label{e1.62} \\
			\bar{a}_{2^{k+1}j+2^k-1, 2^ki+1}(4n+2)&\equiv 0\pmod {2^{k+1}}\label{e1.63}\\
			\bar{a}_{2^{k+1}j+2^k, 2^ki}(4n+2)&\equiv 0\pmod {2^{k+1}}\label{e1.64}\\
			\bar{a}_{2^{k+1}j+2^k-1, 2^ki+1}(4n+3)&\equiv 0\pmod {2^{k+2}}\label{e1.65}\\
			\bar{a}_{2^{k+1}j+2^k, 2^ki}(4n+3)&\equiv 0\pmod {2^{k+2}}\label{e1.66}\\
			\bar{a}_{2^{k+1}j+2^k, 2^ki}(8n+4)&\equiv 0 \pmod {2^{k+1}}\label{e1.68}\\
			\bar{a}_{2^{k+1}j+2^k-1, 2^ki+1}(8n+5)&\equiv 0 \pmod {2^{k+2}}.\label{e1.69}
		\end{align}
	\end{theorem}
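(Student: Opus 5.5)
The plan is to rewrite the generating function \eqref{gf} in terms of Ramanujan's theta function $\varphi(q)=\sum_{n\in\mathbb{Z}}q^{n^2}$ and then reduce everything to the elementary fact that a power series $\equiv 1\pmod 2$, raised to the power $2^k$, is $\equiv 1\pmod{2^{k+1}}$. Since $\varphi(-q)=f_1^2/f_2$ and $\varphi(-q^2)=f_2^2/f_4$, we have
\[
\bar{A}_{r,s}(q)=\Big(\frac{f_2}{f_1^2}\Big)^{s}\Big(\frac{f_2^2}{f_4}\Big)^{s-r}=\varphi(-q)^{-s}\varphi(-q^2)^{s-r}.
\]
Using $\varphi(q)\varphi(-q)=\varphi(-q^2)^2$, this becomes
\begin{equation*}
\bar{A}_{r,s}(q)=\varphi(q)^{s}\,\varphi(-q^2)^{-(r+s)} .
\end{equation*}
The second factor is a series in $q^2$. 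Also $\varphi(-q^2)=1+2\sum_{n\ge1}(-1)^nq^{2n^2}\equiv 1\pmod 2$. Hence, if $2^k\mid r+s$, then $\varphi(-q^2)^{-(r+s)}\equiv 1\pmod{2^{k+1}}$. More precisely, writing $\varphi(-q^2)=1+2Y$ with $Y\in\mathbb{Z}[[q^2]]$, a binomial expansion gives $\varphi(-q^2)^{-(r+s)}\equiv 1+2^{k+1}E(q^2)\pmod{2^{k+2}}$ for some integral series $E$.

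For \eqref{e1.61} and \eqref{e1.62} I use the 2-dissection $\varphi(q)=\varphi(q^4)+2q\psi(q^8)$. Since the factor $\varphi(-q^2)^{-(r+s)}$ is even in $q$, the odd part of $\bar A_{r,s}(q)$ is
\[
\varphi(-q^2)^{-(r+s)}\sum_{j\ \mathrm{odd}}\binom{s}{j}2^jq^j\psi(q^8)^j\varphi(q^4)^{s-j}.
\]
It then suffices to bound $\nu_2\big(2^j\binom{s}{j}\big)\ge j+\nu_2(s)-\nu_2(j)$. For $s=2^k$ and $j\ge1$ this is at least $k+1$, because $j-\nu_2(j)\ge1$. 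For $s=2^{k-1}(2i+1)$ it is at least $k$. These two bounds give \eqref{e1.61} and \eqref{e1.62}.

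For \eqref{e1.63}--\eqref{e1.69}, note that in both parameter families $r+s=2^k(2j+i+1)$ resp.\ $2^k(2j+i+1)$. So the $\varphi(-q^2)$-factor is $\equiv 1\pmod{2^{k+1}}$ and $\equiv 1+2^{k+1}E(q^2)\pmod{2^{k+2}}$. For the $\varphi(q)^s$ factor I write $\varphi(q)^{2^k}=(1+2X)^{2^k}$ with $X=\sum_{n\ge1}q^{n^2}$, which is $\equiv 1+2^{k+1}(X+X^2)\pmod{2^{k+2}}$ when $k\ge1$.

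In the family $s=2^ki+1$ this gives $\varphi(q)^s\equiv\varphi(q)\big(1+2^{k+1}i(X+X^2)\big)\pmod{2^{k+2}}$; in the family $s=2^ki$ it gives $\varphi(q)^s\equiv 1+2^{k+1}i(X+X^2)$.

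Modulo $2^{k+1}$ we are therefore left with $\varphi(q)$ or $1$. Neither has any term $q^{4n+2}$ (squares are $\equiv0,1\pmod 4$) or $q^{8n+4}$ with nonzero coefficient except through $4m^2$ with $m$ odd. That last point needs care for \eqref{e1.68}: there the product is $1+2^{k+1}(\cdots)$, so only the constant term $1$ survives mod $2^{k+1}$, and it contributes nothing at $n\ge 1$. This yields \eqref{e1.63}, \eqref{e1.64} and \eqref{e1.68}.

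For the modulus $2^{k+2}$ in \eqref{e1.65}, \eqref{e1.66} and \eqref{e1.69}, I expand the product to first order in $2^{k+1}$. In the $s=2^ki$ case the product is $1+2^{k+1}\big(i(X+X^2)+E(q^2)\big)$. In the $s=2^ki+1$ case it is $\varphi(q)+2^{k+1}\varphi(q)\big(i(X+X^2)+E(q^2)\big)$, and there every odd-exponent term of $\varphi(q)$ carries a further factor $2$, which kills it modulo $2^{k+2}$. So I only need the parity of the coefficients of $X+X^2$ (together with $E$, resp.\ $\varphi(q)$ times these) on the progressions $4n+3$ and $8n+5$. Mod 2, $X^2\equiv X(q^2)$, so $X+X^2\equiv\sum_{n\ge1}(q^{n^2}+q^{2n^2})$. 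Its exponents are $\equiv 0,1\pmod 4$ or $\equiv 0,2\pmod 8$, so it has no $q^{4n+3}$ or $q^{8n+5}$ terms; $E(q^2)$ is even and $\varphi(q)$'s odd terms are at $m^2\equiv1\pmod 8$.

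The main obstacle I expect is this last bookkeeping step. One must show that the mod-2 reductions of $E(q^2)$ (which comes from $\varphi(-q^2)^{-2^k m}$, so $E\equiv m(Y+Y^2)$ up to sign, supported on exponents $2n^2$ and $4n^2$) and of the cross terms in $\varphi(q)\cdot(\cdots)$ avoid the residue classes $3\pmod 4$ and $5\pmod 8$. I would verify this by explicitly listing exponent residues of $n^2,2n^2,4n^2$ and their sums modulo $8$.
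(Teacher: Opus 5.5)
Your proposal is correct and complete. The step you flag as the main obstacle is already settled by what you wrote. Since $E(q^2)\in\mathbb{Z}[[q^2]]$, it cannot contribute to the odd exponents $4n+3$ and $8n+5$. In the family $s=2^ki+1$ the cross term is $2^{k+1}\cdot 2X\cdot W\equiv 0\pmod{2^{k+2}}$; it is every nonconstant term of $\varphi(q)$, not only the odd-exponent ones, that carries the extra factor $2$. Hence $\bar{A}_{r,s}(q)\equiv\varphi(q)+2^{k+1}W\pmod{2^{k+2}}$ with $W=i(X+X^2)+E(q^2)$, and $W \bmod 2$ is supported on exponents $n^2$, $2n^2$, $4n^2$. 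No further residue listing is needed. Your sentence about $q^{8n+4}$ is muddled, since $\varphi(q)$ does contain $q^{4m^2}$ with $m$ odd. You nevertheless treat \eqref{e1.68} correctly, because it concerns only the family where $\bar{A}_{r,s}\equiv 1\pmod{2^{k+1}}$.

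For \eqref{e1.61} and \eqref{e1.62} your argument is the paper's. Lemma \ref{ld} is exactly $\varphi(q)=\varphi(q^4)+2q\psi(q^8)$ multiplied by $f_4^2/f_2^5$, and the paper also finishes with the $2$-adic valuation of $2^m\binom{s}{m}$.

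For the other six congruences the routes differ. The paper keeps dissecting. It applies Lemma \ref{ld} again to get the eta-quotient formulas \eqref{e2n} and \eqref{e21n} for the even- and odd-indexed subsequences, then discards the binomial sums modulo $2^{k+1}$ and extracts $q^{2n+1}$. It needs a third dissection for \eqref{e1.68}, and \eqref{e1.69} is omitted.

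You instead work globally with $\bar{A}_{r,s}(q)=\varphi(q)^s\varphi(-q^2)^{-(r+s)}$. This is a closed form of the product in Lemma \ref{cai}, because $\varphi(-q^2)^{-1}=\prod_{i\ge1}\varphi(q^{2^i})^{2^{i-1}}$. You combine it with $(1+2X)^{2^k}\equiv 1+2^{k+1}(X+X^2)\pmod{2^{k+2}}$. That is the method the paper uses for Theorems \ref{tmod4} and \ref{tmod8}, pushed to modulus $2^{k+2}$.

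The paper's route produces explicit eta-quotient generating functions for the subsequences, which can be fed into further dissections. Yours gives one argument for all six congruences, including the omitted \eqref{e1.69}, and it proves more:
\begin{itemize}
\item In the family $(2^{k+1}j+2^k,2^ki)$ you get $\bar{a}_{r,s}(n)\equiv 0\pmod{2^{k+1}}$ for every $n\ge1$, so \eqref{e1.64} and \eqref{e1.68} are immediate.
\item You only use $2^k\mid r+s$ together with $s\equiv 1$ or $s\equiv 0\pmod{2^k}$. So your proof gives \eqref{e1.63}--\eqref{e1.69} for every $r\equiv -1$ (resp.\ $r\equiv 0$) modulo $2^k$, not just $r\equiv 2^k-1$ (resp.\ $r\equiv 2^k$) modulo $2^{k+1}$. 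For example, $\bar{a}_{7,5}(3)=320\equiv 0\pmod{16}$ with $k=2$.
\end{itemize}
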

\noindent More generally, we present infinite families of congruences modulo prime $p\ge3$.
	\begin{theorem}\label{tmod3}
		For all $n\ge0$ and $k\ge j\ge0$, we have
		\begin{align}
			\bar{a}_{3(k-j)+3, 3k+6}(3n+1)\equiv 0 \pmod 3 \label{e1.20}\\
			\bar{a}_{3(k-j)+3, 3k+6}(3n+2)\equiv 0 \pmod 3 \label{e1.21}\\
			\bar{a}_{3(k-j)+5, 3k+1}(3n+1)\equiv 0 \pmod 3 \label{e1.22}.
		\end{align}
	\end{theorem}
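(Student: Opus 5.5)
The plan is to reduce each generating function modulo $3$ using $f_m^3\equiv f_{3m}\pmod 3$, which follows from the binomial theorem. After this reduction, only a small non-$q^3$-power factor remains, and it must be $3$-dissected. Throughout I work from \eqref{gf}, namely $\bar{A}_{r,s}(q)=f_2^{3s-2r}/(f_1^{2s}f_4^{s-r})$.

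\textbf{The congruences \eqref{e1.20} and \eqref{e1.21}.} Take $r=3(k-j)+3$ and $s=3k+6$. The three exponents are
\[
3s-2r=3k+6j+12,\qquad s-r=3j+3,\qquad 2s=6k+12,
\]
and all of them are divisible by $3$. Hence
\[
\bar{A}_{r,s}(q)\equiv \frac{f_6^{k+2j+4}}{f_3^{2k+4}f_{12}^{j+1}}\pmod 3,
\]
which is a power series in $q^3$. Therefore every coefficient of $q^{3n+1}$ and of $q^{3n+2}$ vanishes modulo $3$. This proves both \eqref{e1.20} and \eqref{e1.21}.

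\textbf{The congruence \eqref{e1.22}: reduction.} Take $r=3(k-j)+5$ and $s=3k+1$. Now
\[
3s-2r=3k+6j-7,\qquad s-r=3j-4,\qquad 2s=6k+2.
\]
I split each power into a multiple of $3$ plus a remainder, writing $f_2^{3(k+2j-3)}\cdot f_2^{2}$, then $f_4^{-(3j-3)}\cdot f_4^{1}$, and $f_1^{-(6k+3)}\cdot f_1$. This gives
\[
\bar{A}_{r,s}(q)\equiv \frac{f_6^{k+2j-3}}{f_3^{2k+1}f_{12}^{j-1}}\, f_1f_2^2f_4 \pmod 3,
\]
where the prefactor is a series in $q^3$. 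So \eqref{e1.22} is equivalent to showing that the terms $q^{3n+1}$ in the $3$-dissection of $f_1f_2^2f_4=(f_1f_2)(f_2f_4)$ vanish modulo $3$.

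\textbf{The congruence \eqref{e1.22}: the dissection.} I will use the known $3$-dissection
\[
f_1f_2=\frac{f_6f_9^4}{f_3f_{18}^2}-qf_9f_{18}-2q^2\frac{f_3f_{18}^4}{f_6f_9^2},
\]
together with its image under $q\mapsto q^2$,
\[
f_2f_4=\frac{f_{12}f_{18}^4}{f_6f_{36}^2}-q^2f_{18}f_{36}-2q^4\frac{f_6f_{36}^4}{f_{12}f_{18}^2}.
\]
Multiplying the two and collecting exponents $\equiv 1\pmod 3$, exactly three cross terms survive: $(q^0,q^4)$, $(q^1,q^0)$ and $(q^2,q^2)$. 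Reducing modulo $3$ with $-2\equiv 1$, the $q^{3n+1}$ component becomes
\[
q\Big(q^3\frac{f_6^2f_9^4f_{36}^4}{f_3f_{12}f_{18}^4}-\frac{f_9f_{12}f_{18}^5}{f_6f_{36}^2}+q^3\frac{f_3f_{18}^5f_{36}}{f_6f_9^2}\Big).
\]
The goal is to show that this bracket is $\equiv 0\pmod 3$.

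\textbf{Main obstacle.} The hard step is verifying this last congruence, which is a theta-function identity in disguise. I would first try to kill the leftover cubes with $f_3\equiv f_1^3$, $f_9\equiv f_3^3$, etc., to rewrite everything in terms of $f_3,f_6,f_{12},f_{18},f_{36}$. I would then test it against classical identities, for instance the $2$-dissections of $f_3^3/f_1$ and of $f_1f_2$ read at $q^3$, or equivalently cubic theta-function relations. As a fallback, I would avoid the product entirely and $3$-dissect $f_1f_2^2f_4$ directly. Here I would write $f_1f_2^2f_4\equiv f_2^3f_4^3/(f_1^2 f_2 f_4^2)$ and use the dissections of $f_2^2/f_1$ (Ramanujan's $\psi(q)$) and of $f_4^2/f_2=\psi(q^2)$, whose $3$-dissection $\psi(q)=A(q^3)+q\psi(q^9)$ has only two residues. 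Before committing, I would check the first several coefficients numerically to confirm that the residue-$1$ part vanishes.
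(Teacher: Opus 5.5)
Your argument for \eqref{e1.20} and \eqref{e1.21} is correct. The three exponents $3k+6j+12$, $6k+12$, $3j+3$ are all multiples of $3$, so $\bar{A}_{r,s}(q)$ is congruent modulo $3$ to a series in $q^3$. The paper uses the same cube reduction, but it works with $(r,s)=(3,6)$ and then applies Lemma \ref{lmodp}; you do it directly for general $(k,j)$.

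For \eqref{e1.22}, the step you call the main obstacle cannot be overcome, because the congruence you are trying to prove is false. The bracket you need to vanish modulo $3$ has constant term $-1$. Equivalently, the coefficient of $q$ in $f_1f_2^2f_4$ is $-1$. Directly, $\bar{a}_{r,s}(1)=2s$, counting $1$ and $\overline{1}$ in $s$ colors. With $s=3k+1$ this is $6k+2\equiv 2\pmod 3$, so \eqref{e1.22} as printed fails at $n=0$ for every $k\ge j\ge 0$. The numerical check you planned would have caught this at the first coefficient.

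The paper's proof actually establishes the residue class $3n+2$, so the printed statement has a typo. By Lemma \ref{cai}, $\bar{A}_{5,1}(q)\equiv \phi(q)\prod_{i\ge 1}\phi(q^{3\cdot 2^i})^{2^i}\pmod 3$. Since $m^2\not\equiv 2\pmod 3$, this gives $\bar{a}_{5,1}(3n+2)\equiv 0\pmod 3$, and Lemma \ref{lmodp} extends it to the whole family.

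Your own reduction gives this corrected statement immediately, with no $3$-dissection. From $\phi(q)=f_2^5/(f_1^2f_4^2)$ and $f_m^3\equiv f_{3m}\pmod 3$, you get $f_1f_2^2f_4\equiv \phi(q)\,f_3f_{12}/f_6\pmod 3$. Hence the $q^{3n+2}$ component vanishes, while the $q^{3n+1}$ component is $2\sum_{m\ge 1,\,3\nmid m}q^{m^2}$ times a unit.

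Separately, your $(q^2,q^2)$ cross term has the wrong sign: $(-2)(-1)=2\equiv -1\pmod 3$.
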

	
		\begin{theorem}\label{tmodp}
	For all $n\ge0$, $k\ge j\ge0$, and prime $p\ge 3$, with  $1\le r\le p-1$, such that 
	\begin{enumerate}
			\item $r$ is a quadratic non-residue modulo $p$,
			\begin{align}
				\bar{a}_{p(k-j)+(p-1), pk+(p+1)} (pn+r)\equiv 0\pmod {p}, \label{e1.23}
			\end{align}
			\item $2^{-1}r$ is a quadratic non-residue modulo $p$, 
			\begin{align}
				\bar{a}_{p(k-j)+(p-1), pk+p} (pn+r)\equiv 0\pmod {p}.\label{e1.24}
			\end{align}
		\end{enumerate}
	\end{theorem}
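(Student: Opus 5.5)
The plan is to reduce the generating function \eqref{gf} modulo $p$ to a function of $q^p$ times a classical theta function, and then to read off which exponents modulo $p$ can occur. Throughout, write $\rho$ for the residue called $r$ in the statement, to avoid confusion with the color parameter $r$ in $\bar{a}_{r,s}$. The basic tool is the binomial-theorem congruence $f_m^p\equiv f_{mp}\pmod p$. We also use the two theta identities
\begin{align*}
\varphi(q)=\sum_{n=-\infty}^{\infty}q^{n^2}=\frac{f_2^5}{f_1^2f_4^2},\qquad \varphi(-q)=\frac{f_1^2}{f_2}.
\end{align*}
Replacing $q$ by $q^2$ in the second gives $\varphi(-q^2)=f_2^2/f_4=\sum_{n}(-1)^nq^{2n^2}$.

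For \eqref{e1.23}, we take $r=p(k-j)+(p-1)$ and $s=pk+(p+1)$ in \eqref{gf}. A direct computation of the exponents gives
\[
3s-2r=pk+2pj+p+5,\qquad 2s=2pk+2p+2,\qquad s-r=pj+2 .
\]
Hence
\[
\bar{A}_{r,s}(q)=\frac{f_2^{p(k+2j+1)}}{f_1^{p(2k+2)}f_4^{pj}}\cdot\frac{f_2^5}{f_1^2f_4^2}\equiv \frac{f_{2p}^{k+2j+1}}{f_p^{2k+2}f_{4p}^{j}}\;\varphi(q)\pmod p .
\]
The first factor is a power series in $q^p$. So the coefficient of $q^{pn+\rho}$ on the right is a sum of products, each of which contains the coefficient of some $q^{m}$ in $\varphi(q)$ with $m\equiv\rho\pmod p$. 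Such an $m$ must be a square $m=t^2$ with $t^2\equiv\rho\pmod p$. Since $1\le\rho\le p-1$ is a quadratic non-residue, no such $t$ exists. Therefore every such coefficient vanishes and $\bar{a}_{r,s}(pn+\rho)\equiv0\pmod p$.

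For \eqref{e1.24}, we take $s=pk+p$ with the same $r$. The exponents become
\[
3s-2r=p(k+2j+1)+2,\qquad 2s=p(2k+2),\qquad s-r=pj+1 .
\]
So modulo $p$ the generating function is congruent to
\[
\frac{f_{2p}^{k+2j+1}}{f_p^{2k+2}f_{4p}^{j}}\cdot\frac{f_2^2}{f_4}=\frac{f_{2p}^{k+2j+1}}{f_p^{2k+2}f_{4p}^{j}}\;\varphi(-q^2).
\]
Here the exponents in $\varphi(-q^2)$ are $2t^2$. An exponent congruent to $\rho$ modulo $p$ would require $t^2\equiv2^{-1}\rho\pmod p$, where $2$ is invertible because $p$ is odd. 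This is impossible by hypothesis, and the same argument as before gives the result.

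The only genuine obstacle is the bookkeeping: we must check that the leftover exponents are exactly $f_2^5f_1^{-2}f_4^{-2}$ (respectively $f_2^2f_4^{-1}$) for every $k\ge j\ge0$. In particular $s-r=pj+2\ge 2$ (respectively $pj+1\ge1$), so no negative powers cause trouble. We also need the first factor to be an honest series in $q^p$ with integer coefficients, which holds since all its exponents are nonnegative integers. Once these are checked, the congruences are immediate.
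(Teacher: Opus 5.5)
Your proposal is correct and is essentially the paper's argument: reduce modulo $p$ using $f_m^p\equiv f_{mp}\pmod p$ to a series in $q^p$ times $\phi(q)$ (resp.\ $\phi(-q^2)=f_2^2/f_4$), then use the hypothesis that no exponent $t^2$ (resp.\ $2t^2$) can be congruent to the given non-residue modulo $p$. The only difference is organizational: the paper proves the base cases $\bar{a}_{p-1,p+1}$ (via Lemma~\ref{cai}) and $\bar{a}_{p-1,p}$ and then lifts them to general $k\ge j\ge 0$ with Lemma~\ref{lmodp}, whereas you split off the $p$-th-power factor for general $k,j$ directly, which amounts to inlining that lemma; your exponent computations are correct.
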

We organize the rest of the paper as follows. In Section \ref{s2}, we present the definition of theta function $\phi(q)$ and dissection formula for $\dfrac{1}{f_1^2}$ critical to prove our main results. In Section \ref{s3}-\ref{s5} we demonstrate Theorem \ref{tmod4}-\ref{tmodp} using elementary $q$-operations which may motivate further avenues of research.
	\section{Preliminary Results}\label{s2}
	In this section, we collect lemmas that are essential in the proof of our main results. Following {{\cite[p. 34]{rnb}}} and {{\cite[Eq. 1.2.1]{spi}}}, we are primarily concerned with Ramanujan's theta function $f(a,b)$ defined as
		\begin{align}
		f(a,b):=\sum_{n=-\infty}^{\infty}a^{n(n+1)/2}b^{n(n-1)/2},\;\; |ab|<1 \label{e2.1}. 
	\end{align} 
	Ramanujan \cite{rnb} also rediscovered the classical Jacobi's triple-product identity 
	\begin{align*}
		f(a,b)=(-a;ab)_\infty(-b;ab)_\infty(ab;ab)_\infty.
	\end{align*}
It is worth noting here that (see {{\cite[Entry 22, p. 36-37]{rnb}}})
		\begin{align}
		\phi(q):&=f(q,q)=1+2\sum_{n=1}^{\infty}q^{n^2}=\dfrac{f_2^5}{f_1^2f_4^2}\label{phi}\\
		\phi(-q)&=\dfrac{f_1^2}{f_2}\label{phim}.
	\end{align}
	We record the following congruence which can be easily obtained by employing binomial theorem.
	For any positive integer $k$ and $m$, and prime $p$, we have
	\begin{align*}
		f_m^{p^k} \equiv f_{mp}^{p^{k-1}} \pmod{p^k}.
	\end{align*}
	 We now recollect the key dissection of $\dfrac{1}{f_1^2}$ from Hirschhorn \cite{poq}.
	 \begin{lemma}[{{\cite[Equation 1.9.4]{poq}}}]\label{ld}
	 	We have
	 \begin{align}
	 	\dfrac{1}{f_1^2}=\dfrac{f_8^5}{f_2^5f_{16}^2}+2q\dfrac{f_4^2f_{16}^2}{f_2^5f_8}.
	 \end{align}
	 \end{lemma}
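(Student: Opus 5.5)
The plan is to reduce $1/f_1^2$ to a single theta function and then $2$-dissect that theta function by splitting its series into even and odd indices.

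\textbf{Step 1: rewrite $1/f_1^2$ using $\phi(q)$.} By \eqref{phim}, $f_1^2=f_2\,\phi(-q)$. Multiplying \eqref{phi} and \eqref{phim} gives
\begin{align*}
\phi(q)\phi(-q)=\frac{f_2^5}{f_1^2f_4^2}\cdot\frac{f_1^2}{f_2}=\frac{f_2^4}{f_4^2}.
\end{align*}
Hence
\begin{align*}
\frac{1}{f_1^2}=\frac{1}{f_2\,\phi(-q)}=\frac{\phi(q)}{f_2\,\phi(q)\phi(-q)}=\frac{f_4^2}{f_2^5}\,\phi(q).
\end{align*}
The prefactor $f_4^2/f_2^5$ is already a function of $q^2$. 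So the lemma reduces to a $2$-dissection of $\phi(q)$.

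\textbf{Step 2: $2$-dissect $\phi(q)$.} Write $\phi(q)=\sum_{n\in\mathbb{Z}}q^{n^2}$ and split $n$ by parity.
\begin{itemize}
\item The terms $n=2m$ give $\sum_{m}q^{4m^2}=\phi(q^4)$.
\item The terms $n=2m+1$ give $q\sum_{m\in\mathbb{Z}}q^{4m(m+1)}$. The substitution $m\mapsto -1-m$ leaves $m(m+1)$ unchanged, so this sum equals $2q\sum_{m\ge0}q^{8\cdot m(m+1)/2}=2q\,\psi(q^8)$, where $\psi(q)=\sum_{m\ge0}q^{m(m+1)/2}$.
\end{itemize}
Thus $\phi(q)=\phi(q^4)+2q\,\psi(q^8)$.

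\textbf{Step 3: product forms of the two pieces.}
\begin{itemize}
\item From \eqref{phi} with $q\to q^4$, $\phi(q^4)=f_8^5/(f_4^2f_{16}^2)$.
\item For $\psi$: $f(q,q^3)=\sum_{n\in\mathbb{Z}}q^{2n^2+n}$, and pairing $n$ with $-n-1$ shows this equals $\psi(q)$. By the Jacobi triple product quoted above, $f(q,q^3)=(-q;q^4)_\infty(-q^3;q^4)_\infty(q^4;q^4)_\infty=(-q;q^2)_\infty f_4$. Since $(-q;q^2)_\infty=f_2^2/(f_1f_4)$, this gives $\psi(q)=f_2^2/f_1$. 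Hence $\psi(q^8)=f_{16}^2/f_8$.
\end{itemize}

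\textbf{Step 4: combine.} Multiplying by $f_4^2/f_2^5$ gives
\begin{align*}
\frac{1}{f_1^2}=\frac{f_4^2}{f_2^5}\left(\frac{f_8^5}{f_4^2f_{16}^2}+2q\frac{f_{16}^2}{f_8}\right)=\frac{f_8^5}{f_2^5f_{16}^2}+2q\frac{f_4^2f_{16}^2}{f_2^5f_8},
\end{align*}
which is the lemma.

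\textbf{Main obstacle.} Nothing here is deep. The step most prone to error is the product form of $\psi(q)$ in Step 3, specifically getting the identity $(-q;q^2)_\infty=f_2^2/(f_1f_4)$ right. It follows from $(-q;q)_\infty=f_2/f_1$ and $(-q^2;q^2)_\infty=f_4/f_2$, since $(-q;q^2)_\infty=(-q;q)_\infty/(-q^2;q^2)_\infty$.
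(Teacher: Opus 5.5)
Your proof is correct. The paper gives no proof of this lemma; it imports it from Hirschhorn's book. Your argument is a self-contained derivation, and it is the standard one behind that identity. You write $1/f_1^2=\frac{f_4^2}{f_2^5}\,\phi(q)$ using $\phi(q)\phi(-q)=f_2^4/f_4^2=\phi(-q^2)^2$. You then insert the $2$-dissection $\phi(q)=\phi(q^4)+2q\,\psi(q^8)$ together with $\psi(q)=f_2^2/f_1$. All product manipulations check out, including $(-q;q^2)_\infty=f_2^2/(f_1f_4)$ and the final matching of both terms.

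One justification should be tidied: the claim that $f(q,q^3)=\psi(q)$ follows by ``pairing $n$ with $-n-1$''. The exponent $2n^2+n$ is not invariant under $n\mapsto -n-1$; it becomes $2n^2+3n+1$. The correct matching is as follows. For $n\ge 0$ one has $2n^2+n=\frac{2n(2n+1)}{2}$. For $n=-m$ with $m\ge 1$ one has $2m^2-m=\frac{(2m-1)\,2m}{2}$. So each triangular number $\frac{k(k+1)}{2}$, $k\ge 0$, arises exactly once: even $k$ from $n\ge0$ and odd $k$ from $n<0$. Alternatively, you can simply quote $\psi(q)=f(q,q^3)$ from Entry 22 of Berndt's \emph{Ramanujan's Notebooks}. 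This slip does not affect the result.
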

	\noindent To prove our theorems, we also require the following precursory result proved by Thejitha, Sellers, and Fathima \cite{tsf}.
	\begin{lemma}[{{\cite[Lemma~2.6]{tsf}}}]\label{lmodp}
		For all prime $p$, and positive integers $\alpha_i, \beta_i, \gamma_i, \delta_i, a_i, b_i, \text{and} \;\lambda$, define \\
		\begin{align*}
			\displaystyle{\sum_{n=0}^{\infty}A(n)q^n:=\dfrac{\prod_{i=1}^{j}f_{\alpha_i}^{\gamma_i}}{\prod_{i=1}^{k}f_{\beta_i}^{\delta_i}}}
			\quad	\text{and} \quad \displaystyle{\sum_{n=0}^{\infty}B(n)q^n:=\dfrac{\prod_{i=1}^{l}f_{\alpha_i}^{a_ip^\lambda+\gamma_i}}{\prod_{i=1}^{m}f_{\beta_i}^{b_ip^\lambda+\delta_i}}}. \\
		\end{align*}		
		For all $n\geq 0$ and $1\le C\le p^\lambda -1$, if $A(p^\lambda n+C)\equiv 0\pmod p$, then $B(p^\lambda n+C)\equiv 0\pmod p$.
\end{lemma}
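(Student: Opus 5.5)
Write the generating function of $B$ as that of $A$ multiplied by a correction factor, and show that modulo $p$ this correction factor is a power series in $q^{p^\lambda}$. Such a factor only mixes coefficients of $A$ whose indices agree modulo $p^\lambda$, so the vanishing on the progression $p^\lambda n+C$ passes from $A$ to $B$.

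First, the bookkeeping. The products defining $A$ and $B$ are indexed up to $j,k$ and $l,m$ respectively. We pad the index sets to a common range by allowing exponents $0$ where a factor is absent, i.e.\ $\gamma_i=0$, $\delta_i=0$, $a_i=0$ or $b_i=0$. With this convention,
\begin{align*}
\sum_{n\ge0}B(n)q^n=\Big(\sum_{n\ge0}A(n)q^n\Big)\cdot \frac{\prod_i f_{\alpha_i}^{a_ip^\lambda}}{\prod_i f_{\beta_i}^{b_ip^\lambda}}.
\end{align*}
All series involved have integer coefficients and constant term $1$, so they are invertible in $\mathbb{Z}[[q]]$.

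Next, the key congruence $f_m^{p^\lambda}\equiv f_{mp^\lambda}\pmod p$. For a single factor, the Frobenius identity $(1-x)^p\equiv 1-x^p \pmod p$ gives $(1-q^{mn})^p\equiv 1-q^{mnp}$. Taking the product over $n$ gives $f_m^p\equiv f_{mp}\pmod p$, and iterating $\lambda$ times gives $f_m^{p^\lambda}\equiv f_{mp^\lambda}\pmod p$. This is also the $k=1$ case of the binomial congruence recorded before Lemma \ref{ld}, applied repeatedly. Since the series have constant term $1$, the congruence passes to inverses: if $F\equiv G\pmod p$ with $F,G\in 1+q\mathbb{Z}[[q]]$, then $F^{-1}\equiv G^{-1}\pmod p$. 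Hence
\begin{align*}
\frac{\prod_i f_{\alpha_i}^{a_ip^\lambda}}{\prod_i f_{\beta_i}^{b_ip^\lambda}}\equiv \frac{\prod_i f_{\alpha_ip^\lambda}^{a_i}}{\prod_i f_{\beta_ip^\lambda}^{b_i}}=:\sum_{t\ge0}g(t)q^{p^\lambda t}\pmod p,
\end{align*}
with $g(t)\in\mathbb{Z}$.

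Finally, compare coefficients. This gives $B(n)\equiv\sum_{t\ge0}g(t)A(n-p^\lambda t)\pmod p$, where $A$ of a negative index is taken to be $0$. For $n=p^\lambda N+C$ with $1\le C\le p^\lambda-1$, every index appearing is $p^\lambda(N-t)+C$. Such an index is either negative, contributing nothing, or of the form $p^\lambda n'+C$ with $n'\ge0$, where $A$ vanishes modulo $p$ by hypothesis. Therefore $B(p^\lambda N+C)\equiv0\pmod p$.

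There is no real obstacle. The only points needing care are the padding of index ranges to a common set and the justification that the congruence survives inversion; both are routine.
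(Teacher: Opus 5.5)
Your proof is correct. The paper gives no proof of its own, since the lemma is quoted from Thejitha--Sellers--Fathima, and yours is the standard argument behind it: the generating function of $B$ is that of $A$ times $\prod_i f_{\alpha_i}^{a_ip^\lambda}/\prod_i f_{\beta_i}^{b_ip^\lambda}$, which by $f_m^{p^\lambda}\equiv f_{mp^\lambda}\pmod p$ is congruent to a series in $q^{p^\lambda}$, so it cannot mix residue classes modulo $p^\lambda$.

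Two points you call routine are actually essential. First, you need the hypothesis for \emph{every} $n'\ge 0$, not just the single index in question. The pointwise reading of the statement is false: for $p=2$, $\lambda=1$, $A=1/f_1$, $B=1/f_1^3$, one has $A(9)=30$ even but $B(9)=1479$ odd. Second, the zero-padding is required. An extra factor $f_{\alpha_i}^{\gamma_i}$ in $B$ with $\gamma_i>0$ and no counterpart in $A$ would break the claim.
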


\section{Proof of Theorem \ref{tmod4}-\ref{tmod8}} \label{s3}
 In this section, we first prove the following lemma which is instrumental in proving our main results.
 \begin{lemma}\label{cai}
 	For all $r,s\ge 1$, we have 
 	\begin{align*}
 		\bar{A}_{r,s}(q)=\phi(q)^s\prod_{i=1}^{\infty}\phi(q^{2^i})^{(r+s).2^{i-1}}.
 	\end{align*}
 \end{lemma}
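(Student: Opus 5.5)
We argue entirely at the level of eta-quotients, using the product form \eqref{phi} of $\phi(q)$ and comparing exponents of each $f_{2^i}$. The product $\prod_{i\ge1}\phi(q^{2^i})^{(r+s)2^{i-1}}$ is infinite but converges as a formal power series, since $\phi(q^{2^i})=1+O(q^{2^i})$. We therefore work with partial products and pass to the limit $N\to\infty$. By \eqref{phi}, $\phi(q^{m})=f_{2m}^5/(f_m^2f_{4m}^2)$, so
\begin{align*}
\phi(q)^s=\frac{f_2^{5s}}{f_1^{2s}f_4^{2s}},\qquad \phi(q^{2^i})^{c_i}=\frac{f_{2^{i+1}}^{5c_i}}{f_{2^i}^{2c_i}f_{2^{i+2}}^{2c_i}},\quad c_i=(r+s)2^{i-1}.
\end{align*}

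Fix $N$ and consider $P_N=\phi(q)^s\prod_{i=1}^{N}\phi(q^{2^i})^{c_i}$. Write $t=r+s$, so $c_i=t2^{i-1}$. Collect the exponent $E_m$ of $f_{2^m}$ in $P_N$.
For $f_1$ we get $E_0=-2s$.
For $f_2$ we get $E_1=5s-2c_1=5s-2t$.
For $f_4$ we get $E_2=-2s+5c_1-2c_2=-2s+5t-4t=t-2s$.
For $3\le m\le N$, the contributions are $-2c_{m-2}$ (from $i=m-2$), $+5c_{m-1}$ (from $i=m-1$) and $-2c_m$ (from $i=m$). These sum to $t2^{m-3}(-2+10-8)=0$, so all middle exponents cancel.
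Only boundary terms remain at $m=N+1$ and $m=N+2$. So $P_N=\frac{f_2^{5s-2t}f_4^{t-2s}}{f_1^{2s}}\cdot R_N$, where $R_N$ is a quotient of powers of $f_{2^{N+1}}$ and $f_{2^{N+2}}$.

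Substituting $t=r+s$ gives $f_2^{3s-2r}f_4^{r-s}/f_1^{2s}=f_2^{3s-2r}/(f_1^{2s}f_4^{s-r})$, which is exactly $\bar A_{r,s}(q)$ in \eqref{gf}. Since $R_N=1+O(q^{2^{N+1}})$, letting $N\to\infty$ yields the identity coefficientwise.

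The only delicate points are the exponent bookkeeping at $m=1,2$ and justifying the limit. The limit is harmless because every factor is $1$ plus terms of order at least $q^{2^{N+1}}$.
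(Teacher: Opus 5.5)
Your proof is correct, but it is organized differently from the paper's. The paper does not compute partial products. It splits the eta-quotient as $\bar A_{r,s}(q)=\phi(q)^s\,\phi(q^2)^{r-s}\,\bar A_{r,s}(q^2)^2$, a self-similar functional equation obtained from \eqref{phi}, and then iterates it. In that approach the exponent $2^{i-1}(r-s)+2^{i}s=(r+s)2^{i-1}$ of $\phi(q^{2^i})$ is produced by the iteration rather than guessed and checked.

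Your telescoping of the exponents of $f_{2^m}$ instead verifies the stated product directly. It presupposes the answer, but it is fully explicit about the truncation, which the paper's ``proceed by iteration'' leaves implicit. There one must still note that the leftover factor $\phi(q^{2^N})^{2^{N-1}(r-s)}\bar A_{r,s}(q^{2^N})^{2^N}$ is $1+O(q^{2^N})$.

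Your boundary factor can in fact be identified exactly. For $N\ge 2$, $R_N=f_{2^{N+1}}^{(r+s)2^{N+1}}/f_{2^{N+2}}^{(r+s)2^{N}}=\phi(-q^{2^{N+1}})^{(r+s)2^N}$ by \eqref{phim}. So $P_N=\bar A_{r,s}(q)\,\phi(-q^{2^{N+1}})^{(r+s)2^N}$ is an exact finite identity, and the passage to the limit is then immediate.

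One small point: your value of $E_2$ uses the factor with $i=2$, so you should state that you take $N\ge 2$.
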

	\begin{proof} Thanks to \eqref{gf}, we have
	\begin{align*}
		\bar{A}_{r,s}(q)&=\dfrac{f_2^{3s-2r}}{f_1^{2s}f_4^{s-r}}\\
		&=\dfrac{f_2^{5s}}{f_1^{2s}f_4^{2s}}\cdot \dfrac{f_4^{5r-5s}}{f_2^{2r-2s}f_8^{2r-2s}}\cdot \dfrac{f_4^{6s-4r}}{f_2^{4s}f_8^{2s-2r}}.
	\end{align*}
We apply \eqref{phi} so that
	\begin{align*}
		\bar{A}_{r,s}(q)=\phi(q)^s\phi(q^2)^{r-s}\bar{A}_{r,s}(q^2)^2.
	\end{align*}
	Now, in the above equation we proceed by iteration to complete the proof of Lemma \ref{cai}.
	\end{proof}
 At last, we have enough to prove our main results.
	\begin{proof}[\textbf{Proof of Theorem \ref{tmod4}}]
	Using Lemma \ref{cai} and employing the fact, for all $i\ge 2$, $\phi(q^{2^i})^{(r+s).2^{i-1}}\equiv 1 \pmod 4$, we obtain
	\begin{align*}
		\sum_{n=0}^{\infty}\bar{a}_{r,s}(n)q^n&=\phi(q)^s\prod_{i=1}^{\infty}\phi(q^{2^i})^{(r+s).2^{i-1}}\\
		&\equiv \phi(q)^s\phi(q^2)^{r+s}\pmod 4.
	\end{align*}
Thanks to \eqref{phi}, we have
	\begin{align*}
	\sum_{n=0}^{\infty}\bar{a}_{r,s}(n)q^n&\equiv\displaystyle\left(1+2\sum_{n=1}^{\infty}q^{n^2}\right)^s\displaystyle\left(1+2\sum_{n=1}^{\infty}q^{2n^2}\right)^{r+s}\pmod 4\\
		&=\displaystyle\left(\sum_{j=0}^{s}\binom{s}{j}2^j\displaystyle\left(\sum_{n=1}^{\infty}q^{n^2}\right)^j\right)\displaystyle\left(\sum_{k=0}^{r+s}\binom{r+s}{k}2^k\displaystyle\left(\sum_{n=1}^{\infty}q^{2n^2}\right)^k\right)\\
		&\equiv\displaystyle\left(1+2s\sum_{n=1}^{\infty}q^{n^2}\right)\displaystyle\left(1+2(r+s)\sum_{n=1}^{\infty}q^{2n^2}\right)\pmod 4\\
		&\equiv1+2s\sum_{n=1}^{\infty}q^{n^2}+2(r+s)\sum_{n=1}^{\infty}q^{2n^2}\pmod 4.
	\end{align*}
	This completes the proof of Theorem \ref{tmod4}.
	\end{proof}

	\begin{proof}[\textbf{Proof of Theorem \ref{tmod8}}]
		Using Lemma \ref{cai} and employing the fact, for all $i\ge 3$, $\phi(q^{2^i})^{(r+s).2^{i-1}}\equiv 1 \pmod 8$, we obtain
	\begin{align*}
		\sum_{n=0}^{\infty}\bar{a}_{r,s}(n)q^n&\equiv \phi(q)^s\phi(q^2)^{r+s}\phi(q^4)^{2(r+s)}\pmod 8.
	\end{align*}
		Again, thanks to \eqref{phi}, we have
	\begin{align*}
			\sum_{n=0}^{\infty}\bar{a}_{r,s}(n)q^n	&=\displaystyle\left(1+2\sum_{n=1}^{\infty}q^{n^2}\right)^s\displaystyle\left(1+2\sum_{n=1}^{\infty}q^{2n^2}\right)^{r+s}\displaystyle\left(1+2\sum_{n=1}^{\infty}q^{4n^2}\right)^{2(r+s)}\pmod 4\\
		&=\displaystyle\left(\sum_{j=0}^{s}\binom{s}{j}2^j\displaystyle\left(\sum_{n=1}^{\infty}q^{n^2}\right)^j\right)\displaystyle\left(\sum_{k=0}^{r+s}\binom{r+s}{k}2^k\displaystyle\left(\sum_{n=1}^{\infty}q^{2n^2}\right)^k\right)\\
		&\quad\displaystyle\left(\sum_{l=0}^{2(r+s)}\binom{2(r+s)}{l}2^l\displaystyle\left(\sum_{n=1}^{\infty}q^{4n^2}\right)^l\right)
			\end{align*}
		\begin{align*}
		&\equiv\displaystyle\left(1+2s\sum_{n=1}^{\infty}q^{n^2}+2s(s-1)\sum_{n=1}^{\infty}q^{2n^2}\right)
\displaystyle\left(1+2(r+s)\sum_{n=1}^{\infty}q^{2n^2}+2(r+s-1)(r+s)\sum_{n=1}^{\infty}q^{4n^2}\right)\\
		&\quad\displaystyle\left(1+4(r+s)\sum_{n=1}^{\infty}q^{4n^2}+4(r+s)(2r+2s-1)\sum_{n=1}^{\infty}q^{8n^2}\right)\pmod 8\\
		&\equiv1+2s\sum_{n=1}^{\infty}q^{n^2}+2(3r+2s+s^2)\sum_{n=1}^{\infty}q^{2(2n)^2}+2(r+s^2)\sum_{n=1}^{\infty}q^{2(2n-1)^2}\\
		&\quad+2(r+s)(r+s+1)\sum_{n=1}^{\infty}q^{4n^2}+4s(r+s)\sum_{m,n=1}^{\infty}q^{n^2+2m^2}\pmod 8.
	\end{align*}
	This completes proof of Theorem \ref{tmod8}.
	\end{proof}

	\section{Congruences modulo powers of $2$}
	In this section, we establish Theorem \ref{t1} and \ref{t1.8}.
		\begin{proof}[\textbf{Proof of Theorem \ref{t1}}]
			The proof of \eqref{e1.151} follows immediately from Lemma \ref{cai}.\\
		\indent	Next, from \eqref{gf} and Lemma \ref{ld}, we have
				\begin{align}
				\sum_{n=0}^{\infty}\bar{a}_{r,s}(n)q^n
				&=\dfrac{f_2^{3s-2r}}{f_4^{s-r}}\displaystyle\left(\dfrac{f_8^5}{f_2^5f_{16}^2}+2q\dfrac{f_4^2f_{16}^2}{f_2^5f_8}\right)^{s} \nonumber\\
				&=\dfrac{f_2^{3s-2r}}{f_4^{s-r}}\sum_{m=0}^{s}\binom{s}{m}2^m q^m \displaystyle\left(\dfrac{f_4^2f_{16}^2}{f_2^5f_8}\right)^m\displaystyle\left(\dfrac{f_8^5}{f_2^5f_{16}^2}\right)^{s-m} \nonumber\\
				&=\dfrac{f_8^{5s}}{f_2^{2(s+r)}f_4^{s-r}f_{16}^{2s}}\sum_{m=0}^{s}\binom{s}{m}2^mq^m\dfrac{f_4^{2m}f_{16}^{4m}}{f_8^{6m}}.\label{en}
			\end{align}
			Extracting the terms involving $q^{2n}$ and $q^{2n+1}$ in the expression \eqref{en}, yields
			\begin{align*}
				\sum_{n=0}^{\infty}\bar{a}_{r,s}(2n)q^n&=\dfrac{f_4^{5s}}{f_1^{2(s+r)}f_2^{s-r}f_{8}^{2s}}\sum_{m=0}^{s}\binom{s}{2m}2^{2m}q^m\dfrac{f_2^{4m}f_{8}^{8m}}{f_4^{12m}},\\
				\sum_{n=0}^{\infty}\bar{a}_{r,s}(2n+1)q^n&=\dfrac{f_4^{5s-6}f_2^{2-s+r}}{f_1^{2(s+r)}f_{8}^{2s-4}}\sum_{m=0}^{s}\binom{s}{2m+1}2^{2m+1}q^m\dfrac{f_2^{4m}f_{8}^{8m}}{f_4^{12m}},
			\end{align*}
			respectively.\\
			Now, applying Lemma \ref{ld} again in the equations above, we obtain 
				\begin{align}
				\sum_{n=0}^{\infty}\bar{a}_{r,s}(2n)q^n&=\dfrac{f_4^{5s}f_8^{3s+5r}}{f_2^{2(3s+2r)}f_{16}^{2(s+r)}}\displaystyle\left(\sum_{m=0}^{s}\binom{s}{2m}2^{2m}q^m\dfrac{f_2^{4m}f_{8}^{8m}}{f_4^{12m}}\right)\nonumber\\
				&\quad \displaystyle\left(\sum_{l=0}^{s+r}\binom{s+r}{l}2^lq^l \dfrac{f_4^{2l}f_{16}^{4l}}{f_8^{6l}}\right),\label{e2n}\\
				\sum_{n=0}^{\infty}\bar{a}_{r,s}(2n+1)q^n&=2\dfrac{f_4^{5s-6}f_8^{3s+5r-4}}{f_2^{2(3s+2r-1)}f_{16}^{2(s+r)}}\displaystyle\left(\sum_{m=0}^{s}\binom{s}{2m+1}2^{2m}q^m\dfrac{f_2^{4m}f_{8}^{8m}}{f_4^{12m}}\right)\nonumber\\
				&\quad \displaystyle\left(\sum_{l=0}^{s+r}\binom{s+r}{l}2^lq^l \dfrac{f_4^{2l}f_{16}^{4l}}{f_8^{6l}}\right),\label{e21n}
			\end{align}
			 respectively.\\
			\indent Now, to prove \eqref{e1.52} of Theorem \ref{t1} we simplify the equation \eqref{e2n} upon using the fact, for $s, m, l,k\ge 1$,
			and $r+s=2k$,
				\begin{align*}
					2^{2m}\binom{s}{2m} \equiv 0\pmod 4 \quad \text{and} \quad 2^l\binom{s+r}{l}\equiv 0 \pmod 4,
				\end{align*}
			to obtain				\begin{align*}
					\sum_{n=0}^{\infty}\bar{a}_{r,s}(2n)q^n&\equiv\dfrac{f_4^{5s}f_8^{3s+5r}}{f_2^{2(3s+2r)}f_{16}^{2(s+r)}}\pmod {4}.
				\end{align*}
				Extracting the terms involving $q^{2n+1}$ in the above equation, we arrive at the desired result.\\
				\indent Next, setting $(r,s)=(2j,2i+1)$ in \eqref{e21n}, we obtain
				\begin{align*}
					\sum_{n=0}^{\infty}\bar{a}_{2j,2i+1}(2n+1)q^n&=2\dfrac{f_4^{10i-1}f_8^{6i+10j-1}}{f_2^{4(3i+2j+1)}f_{16}^{2(2i+1+2j)}}\displaystyle\left(\sum_{m=0}^{i}\binom{2i+1}{2m+1}2^{2m}q^m\dfrac{f_2^{4m}f_{8}^{8m}}{f_4^{12m}}\right)\\
					&\quad \displaystyle\left(\sum_{l=0}^{2j+2i+1}\binom{2i+2j+1}{l}2^lq^l \dfrac{f_4^{2l}f_{16}^{4l}}{f_8^{6l}}\right),
				\end{align*}
			which upon using the fact, for $m, l\ge 1$,
				\begin{align*}
					2^{2m}\binom{2i+1}{2m+1}\equiv 0\pmod {2} \quad \text{and} \quad 2^l\binom{2i+2j+1}{l}\equiv 0\pmod {2},
				\end{align*}
				yields
				\begin{align*}
					\sum_{n=0}^{\infty}\bar{a}_{2j,2i+1}(2n+1)q^n&=2\dfrac{f_4^{10i-1}f_8^{6i+10j-1}}{f_2^{4(3i+2j+1)}f_{16}^{2(2i+1+2j)}}\pmod 4.
				\end{align*}
				Extracting the terms involving $q^{2n+1}$ in above expression, evidently completes the proof of \eqref{e1.53}.\\
				\indent For the poofs of congruences \eqref{e3n1m4}-\eqref{e9n6m4}, we shift our attention to Theorem \ref{tmod4}.\\
				\indent We first note that for $s=2i$ in Theorem \ref{tmod4}, if $3n+1=k^2$, then we have the divisibility by $4$. Also, if $3n+1=2k^2$, we have $2\equiv (2k)^2\pmod 3$, which is a contradiction, since $2$ is a quadratic non-residue modulo $3$. This settles \eqref{e3n1m4}.\\
				\indent	Again, for $3n+2=k^2$, we have $2\equiv k^2\pmod 3$, which is not true, since $2$ is a quadratic non-residue modulo $3$. Now, it is easy to observe for $r+s=2\ell$, in Theorem \ref{tmod4}, if $3n+2=2k^2$, then we have the divisibility by $4$. This settles \eqref{e3n2m4}.\\
				\indent Now, for $9n+3=k^2$, we note $9n+3=3(3n+1)$ is not divisible by $9$. Thus, $9n+3$ is not a square. Similarly, for $9n+3=2k^2$, we have $3(6n+2)=(2k)^2$. Since $3(6n+2)$ is not divisible by $9$, implies $18n+6$ is not a square. Therefore, \eqref{e9n3m4} follows from Theorem \ref{tmod4}.\\
				\indent Lastly, for $9n+6=k^2$, we observe $9n+6=3(3n+4)$ is not divisible by $9$. Thus, $9n+6$ is not a square. Similarly, for $9n+6=2k^2$, we have $3(6n+4)=(2k)^2$. Since $3(6n+2)$ is not divisible by $9$, implies $18n+12$ is not square. Therefore, \eqref{e9n6m4} follows from Theorem \ref{tmod4}.\\
				\indent In order to prove the congruences \eqref{e9n3m8} and \eqref{e9n6m8}, we use Theorem \ref{tmod8}.\\
				\indent It is easy to observe $9n+3=k^2$ and $9n+3=4k^2=(2k)^2$ are not true, since $9n+3$ is not a square. Similarly, for  $9n+3=2(2k)^2$ and $9n+3=2(2k-1)^2$, we have $3(6n+2)=(8k)^2$ and $3(6n+2)=(2(2k-1))^2$, respectively which is again not true. Therefore, for any integer $s$ and odd $r$ in Theorem \ref{tmod8}, if $9n+3=k^2+2l^2$, then we have the divisibility by $8$. This settles \eqref{e9n3m8}.\\
			\indent It is easy to see, $9n+6=k^2$ and $9n+6=4k^2$ are not true. Similarly, for $9n+6=2(2k)^2$ and  $9n+6=2(2k-1)^2$, we have $3(6n+4)=(8k)^2$ and $3(6n+4)=(2(2k-1)^2$, respectively, which is again not true. Therefore, for  any integer $s$ and odd $r$ in Theorem \ref{tmod8}, if $9n+6=k^2+2l^2$, then we have the divisibility by $8$. This settles \eqref{e9n6m8}.
			\end{proof}
			
		\begin{proof}[\textbf{Proof of Theorem \ref{t1.8}}]
			Setting $(r,s)=(r,2^k)$ and $(r,2^ki+2^{k-1})$ in \eqref{en}, we obtain the equations
			\begin{align*}
				\sum_{n=0}^{\infty}\bar{a}_{r,2^k}(n)q^n&=\dfrac{f_8^{5.2^k}}{f_2^{2(2^k+r)}f_4^{2^k-r}f_{16}^{2^{k+1}}}\sum_{m=0}^{k}\binom{2^k}{m}2^mq^m\dfrac{f_4^{2m}f_{16}^{4m}}{f_8^{6m}},\\
				\sum_{n=0}^{\infty}	\bar{a}_{r,2^ki+2^{k-1}}(n)q^n&=\dfrac{f_8^{5(2^ki+2^{k-1})}}{f_2^{2(2^ki+2^{k-1}+r)}f_4^{(2^ki+2^{k-1})-r}f_{16}^{2(2^ki+2^{k-1})}}\cdot\\ &\quad\displaystyle\left(\sum_{j=0}^{2^ki+2^{k-1}}\binom{2^ki+2^{k-1}}{j}2^jq^j\displaystyle\left(\dfrac{f_4^{2j}f_{16}^{4j}}{f_8^{6j}}\right)\right),
			\end{align*}
			respectively, which upon using the fact, for $m\ge1$,
			\begin{align*}
				2^m \binom{2^k}{m}\equiv 0 \pmod {2^{k+1}} \quad\text{and} \quad	2^j \binom{2^ki+2^{k-1}}{j}\equiv 0 \pmod {2^{k}},
			\end{align*}
		yields
			\begin{align*}
				\sum_{n=0}^{\infty}\bar{a}_{r,2^k}(n)q^n&\equiv \dfrac{f_8^{5.2^k}}{f_2^{2(2^k+r)}f_4^{2^k-r}f_{16}^{2^{k+1}}} \pmod {2^{k+1}},\\
				\sum_{n=0}^{\infty}	\bar{a}_{r,2^ki+2^{k-1}}(n)q^n&\equiv\dfrac{f_8^{5(2^ki+2^{k-1})}}{f_2^{2(2^ki+2^{k-1}+r)}f_4^{(2^ki+2^{k-1})-r}f_{16}^{2(2^ki+2^{k-1})}} \pmod {2^{k}},
			\end{align*}
			respectively.
			Extracting the terms involving $q^{2n+1}$ in the above two expressions, we arrive at the desired results \eqref{e1.61} and \eqref{e1.62}, respectively. \\
			\indent	Setting $(r,s)=(2^{k+1}j+2^k-1, 2^ki+1)$ and $(2^{k+1}j+2^k, 2^ki)$ in \eqref{e2n}, we obtain the equations
			\begin{align*}
				\sum_{n=0}^{\infty}\bar{a}_{2^{k+1}j+2^k-1, 2^ki+1}(2n)q^n&=\dfrac{f_4^{5(2^ki+1)}f_8^{3(2^ki)+5(2^{k+1}i+2^k)-2}}{f_2^{2(3(2^ki)+2(2^{k+1}j+2^k)+1)}f_{16}^{2(2^{k+1}j+2^k+2^ki)}}\\
				&\quad\displaystyle\left(\sum_{m=0}^{2^{k-1}}\binom{2^ki+1}{2m}2^{2m}q^m\dfrac{f_2^{4m}f_{8}^{8m}}{f_4^{12m}}\right)\nonumber\\
				&\quad \displaystyle\left(\sum_{l=0}^{2^{k+1}j+2^k+2^ki}\binom{2^{k+1}j+2^k+2^ki}{l}2^lq^l \dfrac{f_4^{2l}f_{16}^{4l}}{f_8^{6l}}\right),
			\end{align*}
			\begin{align*}
				\sum_{n=0}^{\infty}\bar{a}_{2^{k+1}j+2^k, 2^ki}(2n)q^n&=\dfrac{f_4^{5(2^{k}i)}f_8^{3(2^ki)+5(2^{k+1}j+2^k)}}{f_2^{2(3(2^ki)+2(2^{k+1}j+2^k))}f_{16}^{2(2^{k+1}j+2^k+2^ki)}}\\
				&\quad\displaystyle\left(\sum_{m=0}^{2^{k-1}}\binom{2^ki}{2m}2^{2m}q^m\dfrac{f_2^{4m}f_{8}^{8m}}{f_4^{12m}}\right)\nonumber\\
				&\quad \displaystyle\left(\sum_{l=0}^{2^{k+1}j+2^k+2^ki}\binom{2^{k+1}j+2^k+2^ki}{l}2^lq^l \dfrac{f_4^{2l}f_{16}^{4l}}{f_8^{6l}}\right),
			\end{align*}
		respectively. For $m\ge 1$, invoking
			\begin{align*}
				2^{2m}\binom{2^ki}{2m}&\equiv 0\pmod {2^{k+1}}, \quad
				2^{2m}\binom{2^ki+1}{2m}\equiv 0\pmod {2^{k+1}},
				 \text{ and }\\ 2^l\binom{2^{k+1}j+2^k+2^ki}{l}&\equiv 0\pmod {2^{k+1}},
			\end{align*}
			in the above equations and extracting the terms involving $q^{2n+1}$, we complete the proof of \eqref{e1.63} and \eqref{e1.64}.\\
				\indent Setting $(r,s)=(2^{k+1}j+2^k-1, 2^ki+1)$ and $(r,s)=(2^{k+1}j+2^k, 2^ki)$ in \eqref{e21n}, we obtain the equations
				\begin{align*}
					\sum_{n=0}^{\infty}\bar{a}_{2^{k+1}j+2^k-1, 2^ki+1}(2n+1)q^n&=2\dfrac{f_4^{5(2^ki)-1}f_8^{3(2^ki)+5(2^{k+1}j+2^k)-6}}{f_2^{2(3(2^ki)+2(2^{k+1}j+2^k))}f_{16}^{2(2^ki+2^{k+1}j+2^k)}}\\
					&\quad\displaystyle\left(\sum_{m=0}^{2^{k-1}}\binom{2^ki+1}{2m+1}2^{2m}q^m\dfrac{f_2^{4m}f_{8}^{8m}}{f_4^{12m}}\right)\\
					&\quad \displaystyle\left(\sum_{l=0}^{2^{k+1}j+2^k+2^ki}\binom{2^{k+1}j+2^k+2^ki}{l}2^lq^l \dfrac{f_4^{2l}f_{16}^{4l}}{f_8^{6l}}\right),\\
					\sum_{n=0}^{\infty}\bar{a}_{2^{k+1}j+2^k, 2^ki}(2n+1)q^n&=2\dfrac{f_4^{5(2^ki)-6}f_8^{3(2^ki)+5(2^{k+1}j+2^k)-4}}{f_2^{2(3(2^ki)+2(2^{k+1}j+2^k)-1)}f_{16}^{2(2^ki+2^{k+1}j+2^k)}}\\&\quad\displaystyle\left(\sum_{m=0}^{2^{k-1}}\binom{2^ki}{2m+1}2^{2m}q^m\dfrac{f_2^{4m}f_{8}^{8m}}{f_4^{12m}}\right)\\
					&\quad \displaystyle\left(\sum_{l=0}^{2^{k+1}j+2^k+2^ki}\binom{2^{k+1}j+2^k+2^ki}{l}2^lq^l \dfrac{f_4^{2l}f_{16}^{4l}}{f_8^{6l}}\right),
				\end{align*}	
				respectively. For $m, l\ge 1$, applying
				\begin{align*}
			2^{2m}\binom{2^ki}{2m+1}&\equiv 0\pmod {2^{k+1}},
					2^{2m}\binom{2^ki+1}{2m+1}\equiv 0\pmod {2^{k+1}}, \text{ and }\\ 2^l\binom{2^{k+1}j+2^k+2^ki}{l}&\equiv 0\pmod {2^{k+1}},
				\end{align*}
					in the above equations and then extracting the terms involving $q^{2n+1}$, we obtain \eqref{e1.65} and \eqref{e1.66}.\\
		\indent 	Now, setting $(r,s)=(2^{k+1}j+2^k, 2^ki)$ in \eqref{e2n}, we obtain
			\begin{align*}
				\sum_{n=0}^{\infty}\bar{a}_{2^{k+1}j+2^k, 2^ki}(2n)q^n&=\dfrac{f_4^{5(2^ki)}f_8^{3(2^ki)+5(2^{k+1}j+2^k)}}{f_2^{2(3(2^ki)+2(2^{k+1}j+2^k))}f_{16}^{2(2^ki+2^{k+1}j+2^k)}}\\
				&\quad\displaystyle\left(\sum_{m=0}^{2^{k-1}i}\binom{2^ki}{2m}2^{2m}q^m\dfrac{f_2^{4m}f_{8}^{8m}}{f_4^{12m}}\right)\nonumber\\
				&\quad \displaystyle\left(\sum_{l=0}^{2^{k+1}j+2^k+2^ki}\binom{2^{k+1}j+2^k+2^ki}{l}2^lq^l \dfrac{f_4^{2l}f_{16}^{4l}}{f_8^{6l}}\right).
			\end{align*}
		Using the fact, for $m,l\ge 1$,
			\begin{align*}
				2^{2m}\binom{2^ki}{2m}\equiv 0\pmod {2^{k+1}} \quad \text{and} \quad 2^l\binom{2^{k+1}j+2^k+2^ki}{l}\equiv 0\pmod {2^{k+1}}
			\end{align*}
		in the above equation and then extracting the terms involving $q^{2n+1}$, we obtain
			\begin{align*}
				\sum_{n=0}^{\infty}\bar{a}_{2^{k+1}j+2^k, 2^ki}(4n+2)q^n&\equiv\dfrac{f_2^{5(2^ki)}f_4^{3(2^ki)+5(2^{k+1}j+2^k)}}{f_1^{2(3(2^ki)+2(2^{k+1}j+2^k))}f_{8}^{2(2^ki+2^{k+1}j+2^k)}}\pmod {2^{k+1}}\\
				&=\dfrac{f_2^{5(2^ki)}f_4^{3(2^ki)+5(2^{k+1}j+2^k)}}{f_{8}^{2(2^ki+2^{k+1}j+2^k)}}\displaystyle\left(\dfrac{f_8^5}{f_2^5f_{16}^2}+2q\dfrac{f_4^2f_{16}^2}{f_2^5f_8}\right)^{3\cdot2^ki+2^{k+2}j+2^{k+1}}\\
				&=\dfrac{f_4^{3(2^ki)+5(2^{k+1}j+2^k)}f_{8}^{5(3\cdot2^ki+2^{k+2}j+2^{k+1})-2(2^ki+2^{k+1}j+2^k)}}{f_2^{5(2^{k+1}i+2^{k+2}j+2^{k+1})}f_{16}^{2(3\cdot2^ki+2^{k+2}j+2^{k+1})}}\\
				&\quad \displaystyle\left(\sum_{t=0}^{3\cdot2^ki+2^{k+2}j+2^{k+1}}\binom{3\cdot2^ki+2^{k+2}j+2^{k+1}}{t}2^tq^t \dfrac{f_4^{2t}f_{16}^{4t}}{f_8^{6t}}\right).
			\end{align*}
			Again using the fact, for all $t\ge 1$, 
			\begin{align*}
				2^t\binom{3\cdot2^ki+2^{k+2}j+2^{k+1}}{t}\equiv 0\pmod {2^{k+1}}
			\end{align*}
			and then extracting the terms involving $q^{2n+1}$ in the resulting identity, we complete the proof of \eqref{e1.68}.\\
	\indent	Finally, setting $(r,s)=(2^{k+1}j+2^k-1, 2^ki+1)$ in \eqref{e21n}, with similar arguments we obtain \eqref{e1.69}. We, therefore, omit the details involved.
		\end{proof}
	
			\section{Congruences modulo prime $p\ge 3$}\label{s5}
			 In this section, we focus our attention on proving Theorem \ref{tmod3} and \ref{tmodp}.
		\begin{proof}[\textbf{Proof of Theorem \ref{tmod3}}]
			From \eqref{gf} and Lemma \ref{cai}, we have
			\begin{align*}
				\sum_{n=0}^{\infty}\bar{a}_{3,6}(n)q^n&=\phi(q)^{6}\prod_{i\ge1}\phi(q^{2^i})^{9.2^{i-1}}\\
				&\equiv\phi(q^3)^{2}\prod_{i\ge1}\phi(q^{3.2^i})^{3.2^{i-1}}\pmod 3,
			\end{align*}
			which is a function of $q^3$. Hence, extracting terms of the form $q^{3n+1}$ and $q^{3n+2}$, we obtain the following
			\begin{align*}
				\bar{a}_{3,6}(3n+1)\equiv 0 \pmod 3,\\
				\bar{a}_{3,6}(3n+2)\equiv 0 \pmod 3,
			\end{align*}
			respectively. Upon employing Lemma \ref{lmodp} in the equations above, we complete the proof of \eqref{e1.20} and \eqref{e1.21}.\\
			Again, thanks to \eqref{gf} and Lemma \ref{cai}, we have
			\begin{align*}
				\sum_{n=0}^{\infty}\bar{a}_{5,1}(n)q^n&=\phi(q)\prod_{i\ge1}\phi(q^{2^i})^{6.2^{i-1}}\\
				&\equiv\phi(q)\prod_{i\ge1}\phi(q^{3.2^i})^{2^{i}}\pmod 3.
			\end{align*}
			We observe that $3n+2\equiv k^2\pmod 3$ holds if and only if $2\equiv k^2\pmod 3$. Since $2$ is a quadratic non-residue modulo $3$, we obtain
			\begin{align*}
				\bar{a}_{5,1}(3n+2)\equiv 0\pmod 3.
			\end{align*}
		 Using Lemma \ref{lmodp} in above congruence we complete the proof of \eqref{e1.22}.
		\end{proof}
		
	\begin{proof}[\textbf{Proof of Theorem \ref{tmodp}}]
		Thanks to \eqref{gf} and Lemma \ref{cai}, we have 
		\begin{align*}
		\sum_{n=0}^{\infty}\bar{a}_{p-1,p+1}(n)q^n&=\phi(q)^{p+1}\prod_{i\ge1}\phi(q^{2^i})^{2p.2^{i-1}}\\
		&\equiv\phi(q)\phi(q^p)\prod_{i\ge1}\phi(q^{p.2^i})^{2^{i}}\pmod p.
		\end{align*}
		We consider, $pn+r\equiv k^2\pmod p$ which is true if and only if $r\equiv k^2\pmod p$. Since $r$ is assumed to be a quadratic non-residue modulo $p$, we obtain
		\begin{align*}
		\bar{a}_{p-1, p+1}(pn+r)\equiv 0\pmod p.
		\end{align*}
		Using Lemma \ref{lmodp}, we complete the proof of \eqref{e1.23}.\\
			Again thanks to \eqref{gf} and \eqref{phim}, we have
			\begin{align*}
			\sum_{n=0}^{\infty}a_{p-1, p}(n)q^n&=\dfrac{f_2^{p+2}}{f_1^{2p}f_4}\\
			&\equiv \dfrac{f_{2p}}{f_p^2}\dfrac{f_2^2}{f_4}\pmod p\\
		    &=\dfrac{f_{2p}}{f_p^2}\phi(-q^2)\\
			&=\dfrac{f_{2p}}{f_p^2}\sum_{k=-\infty}^{\infty}(-q^2)^{k^2}.
			\end{align*}
			We consider, $pn+r\equiv 2k^2\pmod p$ which is true if and only if $r\equiv 2k^2\pmod p$. Since $2^{-1}r$ is assumed to be a quadratic non-residue modulo $p$, we obtain
			\begin{align*}
				\bar{a}_{p-1, p}(pn+r)\equiv 0\pmod p.
			\end{align*}
		 Using Lemma \ref{lmodp}, we complete the proof of \eqref{e1.24} and hence Theorem \ref{tmodp}.
		\end{proof}
		\section{Concluding remark}
		We encourage the interested reader to further investigate arithmetic properties of $\bar{a}_{r,s}(n)$. In particular, we would be very much pleased to see an elementary proof of congruences in Conjecture \ref{c6.1}.
		\begin{conjecture}\label{c6.1}
			For $k\ge1$ and $j, i\ge 0$, we have
			\begin{align}
				\bar{a}_{2^{k+1}j+2^k-1,2^ki+1}(3n+2)&\equiv 0 \pmod {2^{k+1}}\\
			\bar{a}_{2^{k+1}j+2^k-1,2^ki+1}(9n+3)&\equiv 0 \pmod {2^{k+2}}\\
		\bar{a}_{2^{k+1}j+2^k-1,2^ki+1}(9n+6)&\equiv 0 \pmod {2^{k+2}}.
			\end{align}
			\end{conjecture}

\noindent\textbf{Acknowledgment.} The authors would like to thank Professor James A. Sellers for his suggestions and constructive comments which has substantially improved the paper.

	\bigskip
	\bigskip
	
	\noindent
Department of Mathematics\\
Ramanujan School of Mathematical Sciences\\
Pondicherry University\\
Puducherry- 605 014, India.\\

\noindent Email: \texttt{tthejithamp@pondiuni.ac.in}

\noindent	Email: \texttt{dr.fathima.sn@pondiuni.ac.in} (\Letter)

\end{document}